\newfont{\cyrr}{wncyr10}
\newcommand{\Z}{{\mathbb Z}}
\newcommand{\C}{{\mathbb C}}
\newcommand{\SL}{{\rm SL}}
\renewcommand{\mod}{{\, \rm mod \, }}
\newtheorem{thm}{Theorem}
\newtheorem{lem}[thm]{Lemma}
\newtheorem{prop}[thm]{Proposition}
\begin{document}

\title[Fourier-Coefficients]{Large Fourier coefficients of 
half-integer weight modular forms} 

\author{S. Gun, W. Kohnen and K. Soundararajan}

\address[S. Gun]   
{Institute of Mathematical Sciences, 
HBNI,
C.I.T Campus, Taramani, 
Chennai  600 113, 
India.}
\email{sanoli@imsc.res.in}

\address[W. Kohnen]
{Ruprecht-Karls-Universit\"at Heidelberg, 
Mathematisches Institut, 
Im Neuenheimer Feld 205, 
D-69120 Heidelberg, 
Germany.}
\email{winfried@mathi.uni-heidelberg.de}

\address[K. Soundararajan]   
{Stanford University, 
450 Jane Stanford Way, Building 380,
Stanford, CA 94305-2125.
USA.}
\email{ksound@stanford.edu}

\subjclass[2010]{11F37, 11F30}

\keywords{Square-free Fourier coefficients of 
 half-integer weight cusp forms, $L$-functions, omega result}

\maketitle

\begin{abstract}   This article is concerned with the Fourier coefficients of cusp 
forms (not necessarily eigenforms) of half-integer weight lying 
in the plus space.  We give a soft proof that there are infinitely many 
fundamental discriminants $D$ such that the Fourier coefficients evaluated 
at $|D|$ are non-zero.  By adapting the resonance method, we also demonstrate 
that such Fourier coefficients must take quite large values.  
\end{abstract}

\section{\large \bf Introduction}

\smallskip

Let $k$ be a positive integer, and let $S_{k+\frac 12}$ denote the space of 
cusp forms of half-integral weight $k+\frac 12$ for the group $\Gamma_0(4)$ (which 
consists of the elements of $\Gamma = \SL_2({\Bbb Z})$ with lower left entry divisible by $4$). 
The theory of such forms was developed by Shimura \cite{GS}, and such a form $g$ has a 
Fourier expansion 
\begin{equation} 
\label{1.1} 
g(z) = \sum_{n=1}^{\infty} c(n) e^{2\pi inz}
\end{equation} 
with $z$ in the upper half plane.  We shall restrict attention to forms $g$ in the {\em plus subspace} 
$S_{k+\frac 12}^+$ of those forms whose Fourier coefficients $c(n)$ are zero unless $(-1)^k n \equiv 
0, 1  \pmod 4$ (see Kohnen \cite{WK}).  This paper is concerned with the coefficients $c(|D|)$ where 
$D$ is a fundamental discriminant with $|D|=(-1)^k D >0$.  In particular, we wish to show that these 
coefficients must be non-zero infinitely often, and indeed occasionally get large in terms of $|D|$. 
 For the sake of simplicity, we have restricted attention to level $4$ and to holomorphic forms 
 of half-integer weight, and it should be possible to extend these results to general level, or to non-holomorphic 
 Maass forms.

\smallskip

When $g$ is a Hecke eigenform,  Waldspurger's famous theorem (see \cite{JW}, 
and in fully explicit form \cite{KZ}) states that  
the squares $|c(|D|)|^2$ are proportional to the values $|D|^{k-{1\over 2}}L(f,\chi_D,k)$,
where $f$ is a normalized Hecke eigenform in the space $S_{2k}$ of cusp forms of 
weight $2k$ on $\Gamma$ corresponding to $g$ under the 
Shimura correspondence. Here, $L(f,\chi_D,s)$ denotes the Hecke 
$L$-function of $f$ twisted with the primitive quadratic character $\chi_D$ 
attached to the fundamental discriminant $D$,  and $s=k$ is its central point. In this case, the 
problems of non-vanishing and producing large values of $|c(|D|)|$ amount to the well studied 
problems of non-vanishing and omega results for central values in this family of 
$L$-functions (see, for example, \cite{HL, I, PP, RS, SY}).   
More recently, Hulse et al. \cite{HKKL} have studied sign changes in the coefficients $c(|D|)$'s 
(when normalized to be all real), and further progress on that problem
 is due to Lester and Radziwi{\l \l} \cite{LR}.

\smallskip 

 Our main interest, however, is 
in the situation where $g$ is a general cusp form, and not necessarily a Hecke eigenform.   In the 
case when $g$ is a linear combination of two eigenforms, the problem of non-vanishing was resolved by 
Luo and Ramakrishnan \cite{LR2}.   The general case was resolved in the work of Saha \cite{AS} who showed that 
  for {\it any} non-zero $g$ in $S_{k+{1\over 2}}^+$ there
are infinitely many fundamental discriminants $D$ with $(-1)^kD>0$ 
such that $c(|D|)$ is not zero.  In this paper we give two proofs of this result, showing further that $|c(|D|)|$ 
gets large for many fundamental discriminants $D$.  Our first proof introduces a new Dirichlet series built out of 
the coefficients $c(|D|)$ and Dirichlet $L$-functions attached to the character $\chi_D$.   This proof is qualitative and 
soft, and makes no use of the Waldspurger formula. 
The second proof is based on Waldspurger's formula and the connection to $L$-values.  
It uses the resonance method,  
developed in \cite{KS}, to show that linear combinations of $L$-values can be made large.   The resonance method proceeds by 
comparing the average of $L$-values weighted by a carefully chosen resonator Dirichlet polynomial with the average of the resonator polynomial itself.  If the ratio of these averages can be made large, then one concludes that the $L$-values must get large.  The new feature in our work is to show that a resonator that makes the twists of one $L$-function large does not correlate with twists of other $L$-functions, allowing one to obtain large values of linear combinations of $L$-functions.

\smallskip

\begin{thm} \label{thm1}  Let $g$ be a non-zero element of $S_{k+\frac 12}^+$ 
with Fourier expansion as in \eqref{1.1}.   

(a) There are infinitely many fundamental discriminants $D$ with $(-1)^k D>0$ such that $c(|D|) \neq 0$.  

(b)  Let $\epsilon >0$ be given, and $X$ be large.  There are at least $X^{1-\epsilon}$ 
fundamental discriminants $D$ with $X < (-1)^k D \le 2X$  such that 
$$ 
|c(|D|)| \ge  |D|^{\frac k2 -\frac 14} \exp\Big( \frac{1}{82} \frac{\sqrt{\log |D|}}{\sqrt{\log \log |D|}}\Big).
$$
\end{thm}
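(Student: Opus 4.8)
The plan for part (a) is to attach to $g$ a Dirichlet series in two variables built from the coefficients $c(|D|)$ twisted by Dirichlet $L$-functions $L(s,\chi_D)$, and to show that this object cannot vanish identically. Concretely, I would consider something like
$$
Z(s,w) \;=\; \sum_{\substack{D \text{ fund. disc.}\\ (-1)^kD>0}} \frac{c(|D|)}{|D|^{w}} \, L(s,\chi_D)
$$
(or a suitable variant with an extra Dirichlet polynomial in $s$, or with $L(s,\chi_D)$ raised to a power), and relate it on the one hand to the $L$-function of $g$ and on the other to an expression that manifestly does not vanish. The point is that $Z$ packages \emph{all} the coefficients $c(|D|)$ linearly, so if they were all zero then $Z \equiv 0$; conversely, by unfolding the product against a suitable kernel (for instance integrating $g$ against a theta series or an Eisenstein series, using the Rankin–Selberg method) one should be able to identify $Z$ with a non-trivial meromorphic function — e.g. one whose residue or special value is a non-zero multiple of the Petersson norm $\|g\|^2$. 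Since $g\neq 0$, this norm is positive, forcing $Z\not\equiv 0$, hence some $c(|D|)\neq 0$; and a standard argument (Landau's theorem on the abscissa of a Dirichlet series with eventually one-signed coefficients, applied after squaring, or simply noting that a Dirichlet series with finitely many nonzero terms is a Dirichlet polynomial with a different analytic behavior) upgrades ``some'' to ``infinitely many.'' I expect the main obstacle here to be choosing the right auxiliary series and the right integral representation so that the non-vanishing of $\|g\|^2$ is visibly inherited by $Z$, while keeping the argument soft — i.e. avoiding any appeal to Waldspurger's formula, non-vanishing of central $L$-values, or subconvexity.

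**Part (b): Large values via the resonance method.**

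For part (b) I would restrict to $g$ an eigenform first, where Waldspurger's formula (\cite{KZ}) gives $|c(|D|)|^2 \asymp |D|^{k-1/2} L(f,\chi_D,k)$ with $f$ the Shimura lift, and then handle the general $g = \sum_j \alpha_j g_j$ by the ``new feature'' advertised in the introduction. The resonance method of \cite{KS} proceeds by choosing a resonator Dirichlet polynomial $R(D) = \sum_{\ell} r(\ell)\chi_D(\ell)$ supported on $\ell \le X^{\theta}$ with carefully chosen coefficients (typically $r(\ell)$ multiplicative, supported on products of primes in a dyadic range around $(\log X)(\log\log X)$-ish, with weights tuned so that $\sum r(\ell)^2$ and $\sum r(\ell)r(\ell m)/\sqrt{m}$-type sums are comparable up to the target factor $\exp(c\sqrt{\log X/\log\log X})$), and then comparing
$$
\sum_{X < (-1)^kD \le 2X}^{\flat} |R(D)|^2 \, |c(|D|)|^2 \quad\text{with}\quad |D|^{k-1/2}\sum_{X < (-1)^kD \le 2X}^{\flat} |R(D)|^2 .
$$
The first sum is evaluated by opening $|c(|D|)|^2$ via Waldspurger and expanding $L(f,\chi_D,k)$ into a Dirichlet series (with an approximate functional equation), so that the $D$-sum over the quadratic character produces, after orthogonality of $\chi_D$, a diagonal term $\sum_{n}\lambda_f(n)/\sqrt n \cdot (\text{resonator correlation at } n)$ that beats the second sum by the desired exponential factor; off-diagonal terms and the non-squarefree contributions are shown to be negligible. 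The conclusion that the ratio is $\ge \exp(\tfrac{1}{82+o(1)}\sqrt{\log X/\log\log X})$ for a positive proportion — indeed $\gg X/\log X$, certainly $\ge X^{1-\epsilon}$ — of fundamental discriminants $D$ then follows because the weighted average being large forces many individual terms to be large (a pigeonhole on the ``exceptional set'' combined with an $L^1$-type upper bound for $\sum^{\flat}|R(D)|^2|c(|D|)|^2$ restricted to small values). The genuinely new and hardest step is the general (non-eigenform) case: one must show that a resonator $R$ tuned to make the twists of one $L(f_1,\chi_D,k)$ large has \emph{negligible} correlation with the twists $L(f_j,\chi_D,k)$ for $j\neq j_1$ — i.e. $\sum^{\flat}_D |R(D)|^2 L(f_j,\chi_D,k)$ is of the ``trivial'' size while $\sum^{\flat}_D |R(D)|^2 L(f_1,\chi_D,k)$ is large — so that in $|c(|D|)|^2 = |\sum_j \alpha_j c_j(|D|)|^2$ one dominant square survives and the cross terms and other squares do not cancel it; this decorrelation is where the Hecke eigenvalues $\lambda_{f_1}(n)\neq \lambda_{f_j}(n)$ for distinct newforms, together with the multiplicative structure of $r(\ell)$, must be exploited carefully (controlling the contribution of the $n$ for which the resonator correlation is non-negligible and showing $\sum_n \lambda_{f_j}(n) (\cdots)$ stays bounded). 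I would expect essentially all the technical weight of the paper — and the precise constant $1/82$ — to come from making this orthogonality quantitative while simultaneously running the standard resonance computation for the diagonal newform.
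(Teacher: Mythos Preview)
Your outline for Part (b) is essentially correct and matches the paper's approach closely. The paper reduces to a statement about $L$-values (Theorem~\ref{thm2}) via the triangle inequality $|c(|D|)| \ge |c_1(|D|)| - \sum_{\nu\ge 2}|c_\nu(|D|)|$ followed by Waldspurger on each term separately, rather than by expanding $|c(|D|)|^2$ as you suggest; this sidesteps the cross-terms $c_i(|D|)\overline{c_j(|D|)}$, which Waldspurger does not directly control, though your version would also work after one more Cauchy--Schwarz. The decorrelation of the $f_1$-tuned resonator from $L(f_\nu,\chi_D,k)$ for $\nu\ge 2$ comes precisely from the Rankin--Selberg estimate $\sum_{p\le x} a_1(p)\overline{a_\nu(p)}\,p^{1-2k}\log p = o(x)$, exactly as you anticipate. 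The quantification $|\mathcal S|\ge X^{1-\epsilon}$ uses Cauchy--Schwarz and H\"older together with a sixth-moment bound on the resonator and the Perelli--Pomyka{\l}a second-moment bound $\sum_D L(f_1,\chi_D,k)^2 \ll X^{1+\epsilon}$.

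For Part (a), however, your proposal has a genuine gap. First, the paper's Dirichlet series carries $1/L(\chi_D,\cdot)$, not $L(\chi_D,\cdot)$: one sets $D_g(s)=\sum_D c(|D|)\,|D|^{-s}/L(\chi_D,2s-k+1)$, and the point of the reciprocal is that this is an honest single-variable Dirichlet series $\sum_n \alpha(n)n^{-s}$ which, via the Shimura relation \eqref{2.4}, rewrites as $\sum_\nu \lambda_\nu\, L(g_\nu,s)/L(f_\nu,2s)$. More importantly, the mechanism is \emph{not} that some residue or special value equals $\|g\|^2$ --- there is no Petersson norm in the argument, and an integral representation producing $\|g\|^2$ from a series \emph{linear} in the $c(|D|)$ is not a standard construction (Rankin--Selberg gives $\|g\|^2$ from the quadratic object $\sum |c(n)|^2 n^{-s}$, not from anything like your $Z$). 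Instead the argument runs in two stages that your outline does not anticipate: (i) if only finitely many $c(|D|)$ are non-zero, then $D_g(s)$ inherits a functional equation under $s\mapsto k-\tfrac12-s$ from the Dirichlet $L$-functions, and this is shown to be \emph{incompatible} with the functional equation under $s\mapsto k+\tfrac12-s$ inherited from $L(g_\nu,s)$ and $L(f_\nu,2s)$, forcing $D_g\equiv 0$, i.e.\ \emph{all} $c(|D|)=0$; (ii) knowing all $c(|D|)=0$ is still not a contradiction --- one must then extract from the resulting identity $\sum_\nu \lambda_\nu\, L(g_\nu|W_4,s+1)/L(f_\nu,2s+1)=0$ the relations $\sum_\nu \lambda_\nu c_\nu(|D|)a_\nu(p)=0$ for suitable $D$ and odd primes $p$, and then Rankin--Selberg orthogonality of the $a_\nu(p)$ (together with the fact that each eigenform $g_\nu$ has some $c_\nu(|D|)\neq 0$) forces every $\lambda_\nu=0$. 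Both the functional-equation clash and this second stage are missing from your plan.
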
 

\smallskip

If $g$ is an eigenform, then as mentioned earlier $|c(|D|)|^2$ is proportional 
to  $|D|^{k-\frac 12}L(f, \chi_D, k)$.   The Lindel{\" o}f hypothesis then implies that 
$|c(|D|)| \ll |D|^{\frac k2 - \frac 14+ \epsilon}$ for any $\epsilon >0$.   Writing 
a general $g \in S_{k+\frac 12}^+$ as a linear combination of eigenforms, 
we arrive at the conjecture that 
\begin{equation*}\label{1.2}
|c(|D|)| \ll_{g,\varepsilon}|D|^{{k\over 2}-{1\over 4}+\varepsilon} \qquad (\varepsilon >0).
\end{equation*}
This is an analogue of the Ramanujan-Petersson conjecture in integral weight, 
and remains an outstanding open 
problem.  Indeed,  conjectures on the maximal size of $L$-functions (see \cite{FGH}) 
suggest that for fundamental discriminants $D$, perhaps even the following 
stronger bound holds (for some $C>0$): 
$$ 
|c(|D|)| \ll |D|^{\frac k2 -\frac 14} \exp\Big( C \sqrt{\log |D| \log \log |D|}\Big). 
$$ 

\smallskip

The resonance method \cite{KS} produces large values of $L$-functions in very general settings. 
Although this is not one of the examples worked out in \cite{KS}, the resonance method shows 
that for Hecke eigenforms $f$ of integer weight $2k$, there are infinitely many fundamental 
discriminants $D$ such that  $L(f, \chi_D,k) \gg \exp(c\sqrt{\log |D|/\log \log |D|})$ for a positive 
constant $c$; a somewhat weaker result may be found in \cite{HL}.  Thus for an eigenform 
$g\in S_{k+\frac 12}^+$, one would get corresponding lower bounds for $|c(|D|)|$.  
Theorem \ref{thm1}(b) establishes a similar bound for general $g \in S_{k+\frac 12}^+$, 
and the key is to adapt the resonance method to show that one can produce large values 
of twists of a particular $L$-function  while keeping the twists of all other $L$-functions of 
Hecke eigenforms of weight $2k$ small.  
Work of Bondarenko and Seip \cite{BS} gives an improvement of the resonance method 
of \cite{KS}, producing still larger values of $|\zeta(\tfrac 12+it)|$, and a similar improvement 
for values of $|L(\tfrac 12, \chi)|$ has been obtained in \cite{BT}. However, this method 
exploits positivity (of coefficients, and of orthogonality relations) in crucial ways, and 
does not seem to extend to $L$-functions in other families, such as the family of 
quadratic twists of an eigenform.   Thus, apart from the constant $\frac 1{82}$ 
(which we have made no attempt to optimize), the lower bounds furnished in 
Theorem \ref{thm1} (b) are the best currently known, even in the situation of eigenforms $g$.  

\smallskip 

While Theorem \ref{thm1} (b) produces occasional large values of $|c(|D|)|$, ``typical" values 
of $|c(|D|)|$ tend to be much smaller.  Central values of $L$-functions are conjectured to be 
log-normal with a suitable mean and variance, which is a conjectured analogue of the 
classical work of Selberg on the log-normality of $|\zeta(\tfrac 12+it)|$.   Radziwi{\l \l} 
and Soundararajan \cite{RS} have established one sided central limit theorems for central values 
of quadratic twists 
of elliptic curves with positive sign of the functional equation.  These arguments carry over 
to quadratic twists of eigenforms of larger integer weight, and establish that for all but $o(X)$ 
fundamental discriminants $D$ with $X \le (-1)^k D \le 2X$ one has 
$$
|c(|D|)| \ll_{g, \epsilon} {|D|^{\frac k2 -\frac 14} (\log D)^{-\frac 14 +\epsilon}},
$$ 
where $\epsilon >0$.
The connection with $L$-functions first establishes such a result for eigenforms $g$, and 
then the same conclusion holds for any $g\in S_{k+\frac 12}^+$ by decomposing $g$
in terms of eigenforms.  

\smallskip

In our discussion above, we have confined ourselves to $c(|D|)$ where $D$ is a fundamental discriminant.  
These are the fundamental objects of interest, and the problem of obtaining large values of $c(n)$ for 
$n$ not arising from fundamental discriminants is of a different flavor (and comparatively easier).  For example, 
fixing a fundamental discriminant $D_0$, and varying $m$, the Shimura lift implies that for eigenforms $g$ 
finding large values of $c(|D_0| m^2)$ amounts to finding large values of the 
Hecke eigenvalues $a(m)$ of the Shimura lift.  For work in this direction see \cite{GK}, \cite{SD}.  
    
\medskip 

{\bf Acknowledgments.} S.G. would like to acknowledge MTR/2018/000201 and DAE number theory plan project
for partial financial support. K.S. is partially supported through a grant from the National Science Foundation, 
and a Simons Investigator Grant from the Simons Foundation. This work was carried out while K.S. 
was a senior Fellow at the ETH Institute for Theoretical Studies, whom he thanks for 
their warm and generous hospitality.

\section{\large \bf Notation and review}

\subsection{Half integer weight forms.}  Throughout let $g_1$, $\ldots$, $g_r$ denote a basis of Hecke eigenforms 
for $S_{k+\frac 12}^+$.  Denote the Fourier expansions of $g_{\nu}$ by 
$$ 
g_{\nu}(z) =\sum_{n=1}^{\infty} c_{\nu}(n) e^{2\pi inz}. 
$$ 
Let $g$ be a general cusp form in the space $S_{k+\frac 12}^+$, and write 
$$ 
g= \sum_{\nu=1}^{r} \lambda_{\nu} g_\nu 
$$ 
for some constants $\lambda_{\nu} \in  {\Bbb C}$.   Thus the Fourier coefficients $c(n)$ of $g$ 
are also linear combinations of the Fourier coefficients $c_\nu(n)$: 
$$ 
c(n) = \sum_{\nu=1}^{r} \lambda_\nu c_{\nu}(n). 
$$ 
The Fourier coefficients $c(n)$ satisfy the usual Hecke bound 
$$ 
c(n) \ll_g n^{\frac k2 + \frac 14},
$$ 
while they are expected to satisfy the analogue of the Ramanujan bound namely $c(n) \ll_g n^{\frac k2 -\frac 14+\epsilon}$ 
(which we discussed earlier in the case when $n=(-1)^kD$ for a fundamental discriminant $D$).

\smallskip

We associate to $g$ the Hecke $L$-series
$$
L(g,s)=\sum_{n\geq 1}c(n)n^{-s},  
$$
which by the Hecke bound for $c(n)$ converges absolutely when $\sigma >\frac k2 +\frac 54$.  
Further from \cite{GS} we know that 
 $L(g,s)$ has holomorphic continuation to ${\C}$ and satisfies the functional equation
\begin{equation} 
\label{2.1} 
\Lambda (g|W_4, ~k+ \tfrac{1}{2}-s)=\Lambda (g,s).
\end{equation} 
Here 
$$
g\mapsto g|W_4, ~~~(g|W_4)(z):=(-2iz)^{-k-{1\over 2}}g\Big(-{1\over {4z}}\Big)
$$
is the Fricke involution on $S_{k+{1\over 2}}$, and 
$$
\Lambda(g,s):=\pi^{-s}\Gamma(s)L(g,s). 
$$

\subsection{Dirichlet $L$-functions}   Associated to a fundamental discriminant $D$ 
is a primitive Dirichlet character $\pmod{|D|}$ which we denote by $\chi_D$.   
To $\chi_D$ we may associate the Dirichlet $L$-function 
$$ 
L(\chi_D,s) = \sum_{n=1}^{\infty} \chi_D(n)n^{-s}, 
$$
which converges absolutely in the half-plane $\sigma >1$ and extends analytically to ${\Bbb C}$
(except for a pole at $s=1$ in the case $D=1$ corresponding to $\zeta(s)$).  Put 
$\delta= 0$ if $D>0$ (so that $\chi_D(-1)=1$) and $\delta =1$ if $D<0$ (so that $\chi_D(-1)=-1$).  
 Then the completed $L$-function 
$$
\Lambda(\chi_D,s) :=~
\Big({|D| \over \pi}\Big)^{\frac{s + \delta}2}~\Gamma \Big({{s+\delta}\over 2}\Big)~L(\chi_D,s)
$$
satisfies the functional equation 
\begin{equation}\label{2.2}
\Lambda(\chi_D, 1-s) 
~=~  \Lambda(\chi_D, s).  
\end{equation}
For these classical facts, see for example Iwaniec and Kowalski \cite{IK}.  
\smallskip

\subsection{The Shimura lift and integer weight Hecke eigenforms}  The Shimura correspondence associates to 
every eigenform $g_\nu \in S_{k+\frac 12}^+$ a Hecke eigenform $f_{\nu}$ of weight $2k$ for the full modular group $\SL_2(\Z)$ (see \cite{WK}).  
We normalize $f_\nu$ to have first coefficient $1$, so that it has a Fourier expansion
$$ 
f_{\nu}(z) = \sum_{n=1}^{\infty} a_{\nu} (n) e^{2\pi i nz}, 
$$ 
with $a_{\nu}(1) = 1$.   The Fourier coefficients $a_{\nu}(n)$, which are also the eigenvalues of the Hecke operators, satisfy multiplicative 
Hecke relations, and satisfy the Deligne bound $|a_{\nu}(n)| \le d(n) n^{k-\frac 12}$ with $d(n)$ denoting the number of divisors of $n$.  
Associated to the Hecke eigenform $f_{\nu}$ is the $L$-function 
$$ 
L(f_\nu, s) = \sum_{n=1}^{\infty} \frac{a_{\nu}(n)}{n^s} = \prod_{p} \Big( 1- \frac{a_{\nu}(p)}{p^s} + \frac{p^{2k-1}}{p^{2s}}\Big)^{-1},  
$$
which converges absolutely for $\sigma > k+\frac 12$, extends analytically to ${\Bbb C}$, and satisfies the functional equation 
\begin{equation} 
\label{2.3}
\Lambda(f_{\nu},s) = (2\pi)^{-s} \Gamma(s) L(f_{\nu},s) = (-1)^k \Lambda(f_{\nu}, 2k-s). 
\end{equation}

The coefficients of $g_\nu$ and its Shimura lift $f_\nu$ are related by means of the identity 
$$ 
c_\nu(n^2 |D|) = c_\nu(|D|) \sum_{d|n} \mu(d) \chi_D(d) d^{k-1} a_\nu(n/d),
$$ 
where $D$ is a fundamental discriminant with $(-1)^k D>0$ and $n\geq 1$
or equivalently by means of the Dirichlet series identity 
\begin{equation} 
\label{2.4} 
L(\chi_D, s-k+1) \sum_{n=1}^{\infty} c_{\nu}(|D| n^2) n^{-s} = c_\nu(|D|) L(f_\nu, s). 
\end{equation} 

A deeper relation between the coefficients of $g_\nu$ and the Shimura lift $f_\nu$ is 
given by the Waldspurger formula.  If $D$ is a fundamental discriminant, the $L$-series 
of the $D$-th quadratic twist of $f_\nu$ is given by 
$$ 
L(f_\nu, \chi_D, s) = \sum_{n=1}^{\infty} a_\nu(n) \chi_D(n) n^{-s}. 
$$ 
It converges absolutely for $\sigma > k+\frac 12$, extends analytically to ${\Bbb C}$, and 
satisfies the functional equation 
\begin{equation} 
\label{2.5} 
\Lambda(f_\nu, \chi_D, s) = \Big( \frac{|D|}{2\pi}\Big)^s \Gamma(s) 
L(f_\nu, \chi_D,s) = (-1)^k \chi_D(-1) \Lambda(f_\nu, \chi_D, 2k-s).
 \end{equation} 
 Note that if $D$ is a fundamental discriminant with $(-1)^k D<0$, then the sign 
 of the functional equation above is 
$-1$, and so the central value $L(f_\nu, \chi_D, k)$ equals zero.  In the 
complementary case $(-1)^k D>0$ (which dovetails with 
the definition of the plus space $S_{k+\frac 12}^+$), Waldspurger's formula gives 
\begin{equation} 
\label{2.51} 
|c_\nu(|D|)|^2 = C_\nu |D|^{k-\frac 12} L(f, \chi_D,k).  
\end{equation} 
Here $C_\nu$ is a constant, which Kohnen and Zagier \cite{KZ} obtained in the elegant form 
\begin{equation} 
\label{2.52} 
C_\nu = \frac{(k-1)!}{\pi^{k}} \frac{\langle g_\nu, g_\nu \rangle}{\langle f_\nu, f_\nu \rangle},
\end{equation} 
where $\langle g_\nu, g_\nu\rangle$ and $\langle f_\nu, f_\nu \rangle$ denote 
the normalized Petersson norms of $g_\nu$ and $f_\nu$.

 Finally, we record a consequence of  Rankin--Selberg theory for the coefficients $a_{\nu}(p)$.  Namely, as 
 $x\to \infty$ 
\begin{equation} 
\label{2.6} 
\sum_{p\le x} \frac{|a_{\nu}(p)|^2}{p^{2k-1}} \log p \sim x, 
\end{equation} 
whereas if $\nu_1 \neq \nu_2$ then 
\begin{equation} 
\label{2.7} 
\sum_{p\le x} \frac{a_{\nu_1}(p) \overline{a_{\nu_2}(p)}}{p^{2k-1}} \log p = o(x). 
\end{equation}

\section {\large \bf Non-vanishing of Fourier coefficients}  

 \medskip
 
In this section we establish part (a) of Theorem \ref{thm1}, and  show that if $g \in S_{k+\frac 12}^+$ is not identically zero, then there are infinitely many fundamental  discriminants $D$ with $|D|= (-1)^kD>0$ such that $c(|D|) \neq 0$.  
Our proof will be based on 
 the following Dirichlet series: 
 \begin{equation} 
 \label{3.1} 
 D_g(s): = \sum_{n=1}^{\infty} \frac{\alpha(n)}{n^s}, 
 \end{equation}  
 where, writing $n$ uniquely as $n=|D|m^2$ with $D$ a fundamental discriminant as above, 
 $$
 \alpha(n):= c(|D|)\mu(m)\chi_D(m)m^{k-1}.
$$
The Hecke bound $|c(|D|)| \ll_g |D|^{\frac k2+\frac 14}$ gives $|\alpha(n)| \ll_g n^{\frac k2 +\frac 14}$ 
so that the Dirichlet series $D_g(s)$ converges absolutely in $\sigma >\frac k2 +\frac 54$, and defines a holomorphic function 
in that half-plane.  

In this half-plane of absolute convergence $\sigma > \frac k2 +\frac 54$, we may rewrite $D_g(s)$ as 
\begin{equation} 
\label{3.2} 
D_g(s) = \sum_{(-1)^k D >0} \sum_{m=1}^{\infty} \frac{c(|D|) \chi_D(m) \mu(m)}{|D|^s m^{2s-k+1}} 
= \sum_{(-1)^k D>0} \frac{c(|D|)}{|D|^s L(\chi_D, 2s-k+1)},
\end{equation} 
upon recalling that in the half-plane Re$(z) >1$ one has  
$$ 
\frac{1}{L(\chi_D, z)} = \sum_{m=1}^{\infty} \frac{\mu(m)\chi_D(m)}{m^z}. 
$$

 Since $g= \sum_{\nu} \lambda_\nu g_\nu$ we have 
 $$ 
 D_g(s) = \sum_{\nu=1}^{r} \lambda_\nu D_{g_\nu}(s). 
 $$ 
 Now from \eqref{2.4} (taking there $2s$ in place of $s$) we have 
 $$ 
\frac{ c_{\nu}(|D|)}{|D|^s L(\chi_D, 2s -k +1) } = \frac{1}{L(f_{\nu}, 2s)} \sum_{m=1}^{\infty} \frac{c_{\nu}(|D|m^2)}{|D|^s m^{2s}}, 
$$ 
and summing this over all $D$ with $(-1)^k D >0$ we conclude that 
$$ 
D_{g_{\nu}}(s) = \frac{L(g_\nu, s)}{L(f_{\nu}, 2s)}. 
$$ 
Thus 
 \begin{equation}\label{3.3}
D_g(s)=\sum_{\nu=1}^r\lambda_\nu{{L(g_\nu,s)}\over {L(f_\nu,2s)}}.
\end{equation}
 In particular, $D_g(s)$ 
has meromorphic continuation to $\C$ and is holomorphic for 
$\sigma > {k\over 2}+{1\over 4}$ (since in that half plane $L(f_{\nu},2s)$ has 
an absolutely convergent Euler product, and is therefore non-zero).  
 
\medskip
 
 Suppose now that $g\in S_{k+\frac 12}^+$   
 has only finitely many fundamental discriminants $D$ with $c(|D|) \neq 0$.  
 We seek to show that $g$ must be identically zero; that is, all the $\lambda_\nu$ 
 equal zero.  The proof is in two stages: First, we show that $D_g(s)$ must 
 be identically zero (that is, all the coefficients $c(|D|)$ are zero).  The key input here is 
 that if only finitely many $c(|D|)$ are non-zero, then from \eqref{3.2} $D_g(s)$ inherits a 
 functional equation arising from the one for Dirichlet $L$-functions.  But this turns out to be 
 inconsistent with the functional equation for $D_g(s)$ arising from \eqref{3.3} and the functional equations 
 for $L(g_\nu,s)$ and $L(f_\nu, 2s)$.  
 In the second  stage,  using these functional 
 equations again, we show that $\sum_{\nu =1}^{r} \lambda_\nu c_\nu(|D|) a_\nu(p)$ 
 must vanish for all fundamental discriminants $D$ with $4|D$ and $(-1)^k D>0$ and all odd primes $p$.  
 By invoking Rankin-Selberg relations for $a_\nu(p)$ together with the fact that for each eigenform $g_\nu$ 
 there exists a fundamental discriminant $D$ with $4|D$ and $c_\nu(|D|) \neq 0$ (see \cite{WK}), we finally 
 find that $g=0$; a contradiction.  
  
\medskip

\subsection{Showing that $D_g(s)=0$.}  From the functional equations 
\eqref{2.1} and \eqref{2.3} we see that 
$$ 
\frac{L(g_{\nu}, s)}{L(f_{\nu}, 2s)} = \gamma(s) \frac{L(g_{\nu} |W_4, k+\frac 12-s)}{L(f_{\nu},2k-2s)} 
$$ 
where, upon using the duplication formula for the $\Gamma$-function, 
$$ 
\gamma(s) = (-1)^k\cdot 2^{2k-4s}\cdot \pi^{k-{1\over 2}-2s}\cdot {{\Gamma(2s)
\Gamma(k+{1\over 2}-s})\over {\Gamma(s)\Gamma(2k-2s)}} = (-1)^k \pi^{k-\frac 12 -2s} \frac{\Gamma(s+\frac 12)}{\Gamma(k-s)}. 
$$
Thus we have the functional equation 
\begin{equation} 
\label{3.4} 
D_g(s) = \sum_{\nu=1}^r \lambda_\nu \frac{L(g_\nu,s)}{L(f_\nu,2s)} = 
\gamma(s) \sum_{\nu=1}^{r} \lambda_\nu \frac{L(g_\nu|W_4, k+\frac 12-s)}{L(f_\nu, 2k-2s)}. 
\end{equation}


\smallskip

On the other hand, if only finitely many $c(|D|)$ are non-zero, then 
we may use the functional equation for $L(\chi_D, s)$ (see \eqref{2.2}) in the expression 
\eqref{3.2}.  Thus, with $\delta=0$ if $k$ is even and $\delta =1$ if $k$ is odd,  
\begin{align} 
\label{3.5}
D_g(s) &= \sum_{(-1)^k D >0}{{c(|D|)}\over {|D|^sL(\chi_D,2s-k+1)}} \nonumber\\
&= \pi^{k- \frac{1}{2}-2s} \frac{\Gamma(\frac{2s-k+1+\delta}{2})}{\Gamma(\frac{k- 2s +\delta}{2})}  
\sum_{(-1)^k D>0} \frac{c(|D|}{|D|^{k-\frac 12 -s} L(\chi_D, k-2s)} \nonumber \\
&= \pi^{k- \frac{1}{2}-2s} \frac{\Gamma(\frac{2s-k+1+\delta}{2})}{\Gamma(\frac{k- 2s +\delta}{2})}   D_g(k- \tfrac{1}{2} - s). 
\end{align}
 We warn the reader that, unlike \eqref{3.4} which is a true functional equation, the relation 
 \eqref{3.5} is predicated on the assumption that only finitely many $c(|D|)$ are non-zero (which 
 we are attempting to disprove).  
 Combining this with \eqref{3.4}  (evaluated at $k-\frac 12-s$) we find that
\begin{equation}\label{eq4.12}
D_g(s)= R(s) \sum_{\nu=1}^r\lambda_\nu {{L(g_\nu|W_4, s + 1)}
\over {L(f_\nu, 2s + 1)}}, 
\end{equation}
where 
\begin{equation*}
R(s)= \pi^{k-\frac 12 -2s}  \frac{\Gamma(\frac{2s-k+1+\delta}{2})}{\Gamma(\frac{k- 2s +\delta}{2})}  \gamma(k-\tfrac 12-s) = 
 (-1)^k \cdot \frac{\Gamma(k-s)}{\Gamma(\frac{k+ \delta}{2} -s)}  \cdot
\frac{\Gamma(s + \frac{1+\delta -k}{2})}{\Gamma(1 + s)}.   
\end{equation*}
Since $k$ and $\delta$ have the same parity, $(k \pm \delta)/2$ is
always an integer, and so $R(s)$ is a rational function of $s$, being the ratio of two polynomials of degree
$(k - \delta)/2$. 

\smallskip

If $\sigma = \text{Re}(s)$ is large, then using the Hecke bound for Fourier coefficients of cusp forms we 
see that $L(g_\nu | W_4, s+1)$ is given by an absolutely convergent Dirichlet series.  Further, in such a 
half-plane, using the Euler product, $1/L(f_\nu, 2s+1)$ is also given by an absolutely 
convergent Dirichlet series.  
Thus,  we may view \eqref{eq4.12} as
\begin{equation}\label{n3}
D_g(s) = R(s) E_g(s),
\end{equation}
where $D_g(s)$ and $E_g(s)$ are both Dirichlet series in $s$, absolutely
convergent in some half plane. 

\smallskip 
We are now ready to establish our claim that $D_g(s)$ must be
identically zero. Suppose not, and consider the relation \eqref{n3}
for large real numbers $s$. For large real $s$, we have
\begin{equation*}
D_g(s) = a m^{-s} + O ( (m+1)^{-s} ),
\end{equation*}
where $a \ne 0$ and $m^{-s}$ is the first non-zero term in the Dirichlet
series for $D_g(s)$. Similarly
\begin{equation*}
E_g(s) = bn^{-s} + O ((n+1)^{-s}),
\end{equation*}
for large real $s$ (with $b \ne 0$), and so
$$
\frac{a}{m^s} \left(1 + O\left( \left(\frac{m}{m+1}\right)^s  \right) \right)
= R(s) \cdot \frac{b}{n^s} \left(1 + O\left( \left(\frac{n}{n+1}\right)^s  \right) \right).
$$
Since $R(s) \to (-1)^{k + \frac{k-\delta}{2}}$ as $s \to \infty$, clearly
we must have $m=n$ (and $a = (-1)^{k + \frac{k-\delta}{2}}b$).
But then we must have
$$
R(s) = (-1)^{k + \frac{k-\delta}{2}} + O\left( \left(\frac{n}{n+1}\right)^s  \right),
$$
which forces the rational function $R(s)$ to be a constant. 
Visibly this is a contradiction, and we conclude that 
$D_g(s)$ is identically zero.

\medskip

\subsection{Deducing that $g=0$}  The first stage of our proof has established that 
$c(|D|) =0$ for all fundamental discriminants $D$ with $(-1)^k D >0$.  It remains 
now to establish that $g$ is identically zero, or in other words $\lambda_\nu =0$ 
for all $\nu$.   

\smallskip
Since $D_g(s)$ is identically zero it follows from \eqref{eq4.12} that
 \begin{equation}\label{eq4.21}
\sum_{\nu=1}^r\lambda_\nu {{L(g_\nu|W_4, s+1)}
\over {L(f_\nu, 2s+1)}}=0
\end{equation}
for all $s$.   Precisely, we have the relation above for all $s$ not equalling a zero of the rational 
function $R(s)$, but there are only finitely many such $s$, and by analytic continuation the relation 
must hold for all $s$.  
 
Since $g_\nu$ is in the plus subspace we have
$$
g_\nu|U_4W_4= \Big({2\over{2k+1}}\Big)2^k g_\nu,
$$
where $U_4$ is the operator acting on power series by 
$\sum_{n\geq 0}c(n)q^n|U_4=\sum_{n\geq 0}c(4n)q^n$ (see \cite{WK} p. 250, and 
here $(\frac{2}{2k+1})$ denotes the Jacobi symbol). 
Since $W_4$ is an involution, applying $W_4$ to both sides of the above relation we find that 
$$ 
 g_\nu| U_4 = g_\nu|U_4 W_4 W_4 = \Big({2\over{2k+1}}\Big)2^k g_\nu| W_4. 
$$ 
 Thus, replacing also $s+1$ with $s$, we may rewrite \eqref{eq4.21} as 
\begin{equation}\label{eq4.22}
\sum_{\nu=1}^r\lambda_\nu {{L(g_\nu|U_4, s )}
\over {L(f_\nu, 2s-1)}}=0.
\end{equation}

Now recalling the definition of the $U_4$ operator, and the Euler product for $L(f_{\nu},2s-1)$ we 
see that for $\sigma$ sufficiently large 
$$ 
\frac{L(g_\nu| U_4, s)}{L(f_\nu, 2s-1)} = \Big( \sum_{n=1}^{\infty} \frac{c_{\nu}(4n)}{n^s} \Big) 
\prod_p \Big(1 - \frac{a_{\nu}(p)}{p^{2s -1}} + \frac{p^{2k-1}}{p^{4s-2}}\Big). 
$$ 
Now let $D$ be a fundamental discriminant with $4|D$ and with $(-1)^k D >0$, and let $p$ be an 
odd prime.   The coefficient 
of $(|D|p^2/4)^{-s}$ in the Dirichlet series above equals 
\begin{align*} 
c_{\nu}(|D|p^2) - c_{\nu}(|D|) \cdot pa_\nu(p) &= c_\nu(|D|) (a_\nu(p) - \chi_D(p) p^{k-1}) - c_{\nu}(|D|) \cdot pa_\nu(p) \\
&= c_{\nu}(|D|) \big( a_\nu(p) (1-p) - \chi_D(p) p^{k-1}\big),
\end{align*} 
where we used the  Shimura relation in the middle identity above.  From \eqref{eq4.22}, and since $c(|D|) =0$ 
as we have already established, we find 
$$ 
0 = \sum_{\nu=1}^{r} \lambda_\nu  c_{\nu}(|D|) \big( a_\nu(p) (1-p) - \chi_D(p) p^{k-1}\big) = (1-p) \sum_{\nu=1}^r \lambda_\nu c_\nu(|D|) a_{\nu}(p).  
$$ 
In other words, we conclude that for all fundamental discriminants $D$ with $4|D$ and $(-1)^k D>~0$, 
and all odd primes $p$ we have 
\begin{equation} 
\label{4.10} 
\sum_{\nu =1}^{r} \lambda_\nu c_\nu(|D|) a_{\nu}(p) = 0. 
\end{equation}

Recall that our goal is to show that all the $\lambda_\nu$ must be zero.  Suppose not, and 
(without loss of generality) that $\lambda_1 \neq 0$.  We know from (\cite{WK}, p. 260) 
that for each $\nu$ we can find a 
fundamental discriminant $D_\nu$ with $4|D_\nu$ and $(-1)^kD_\nu>0$, and 
such that $c_\nu(|D_\nu|)\neq 0$.  Apply \eqref{4.10} 
taking $D=D_1$ there, multiply the relation by $\overline{a_1(p)} p^{1-2k}\log p$, 
and sum over all $3 \le p\le x$.  Then 
$$ 
0 = \sum_{\nu=1}^r \lambda_\nu c_\nu(|D_1|) \sum_{3 \le p\le x} a_\nu(p) \frac{\overline{a_1(p)}}{p^{2k-1}} \log p 
\sim \lambda_1 c_1(|D_1|) x,
$$ 
by the Rankin--Selberg estimates \eqref{2.6} and \eqref{2.7}.  This contradiction completes our proof.

 \section{\large \bf Large values of Fourier coefficients}

\smallskip In this section, we begin our proof of part (b) of Theorem \ref{thm1}.  Using Waldspurger's formula, we recast the problem in terms of producing large values of a particular $L$-function while keeping other $L$-values small; see Theorem \ref{thm2} below.  We then show how to deduce Theorem \ref{thm2} from two technical propositions, which will be established in the following sections.  

Let $g =\sum_\nu \lambda_\nu g_\nu$ be a non-zero element in $S_{k+\frac 12}^+$.  Assume without loss of 
generality that $\lambda_1=1$, and that $|\lambda_\nu| \le 1$ for all $\nu =2, \ldots, r$.  By the triangle inequality and Cauchy-Schwarz, we 
obtain  
\begin{equation*} 
|c(|D|)| \ge |c_1(|D|)| - \sum_{\nu=2}^r |c_\nu(|D|)| 
\ge |c_1(|D|)| - \sqrt{r-1} \Big( \sum_{\nu=2}^{r} |c_\nu(|D|)|^2\Big)^{\frac 12}.  
\end{equation*} 
Applying  Waldspurger's formula \eqref{2.51}, it follows that 
\begin{equation} 
\label{5.1} 
|c(|D|)| \ge \big( C_1 |D|^{k-\frac 12} L(f_1, \chi_D,k)\big)^{\frac 12} - 
C \Big( |D|^{k-\frac 12} \sum_{\nu=2}^{r} L(f_\nu, \chi_D, k) \Big)^{\frac 12}, 
\end{equation} 
where $C_1$ is as in \eqref{2.52} (with $\nu=1$ there), and $C>0$ is a 
constant (depending on $f_\nu$, $g_\nu$, but independent of $D$).  
Theorem \ref{thm1} may now be deduced from the following result, which exhibits 
large values of   $L(f_1, \chi_D,k)$ while controlling $L(f_\nu, \chi_D,k)$  for $\nu=2, \ldots, r$.  

\begin{thm}  \label{thm2} Let $A >0$ be a constant, and let $X$ be large.  For 
any $\epsilon >0$, there are $\gg X^{1-\epsilon}$ fundamental 
discriminants $D$ with $X < (-1)^kD \le 2X$ such that 
$$ 
L(f_1, \chi_D,k) > A \sum_{\nu =2}^{r} L(f_\nu, \chi_D, k) + 
\exp\Big( \frac{1}{40} \frac{\sqrt{\log X}}{\sqrt{\log \log X}}\Big). 
$$ 
\end{thm}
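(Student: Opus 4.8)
The plan is to adapt the resonance method of \cite{KS} to the family of quadratic twists of $f_1$, while simultaneously arguing that the same resonator does \emph{not} correlate with the twists of the other eigenforms $f_2,\ldots,f_r$. I would fix a long Dirichlet polynomial resonator $R(D) = \sum_{\ell \in \mathcal{L}} r(\ell)\chi_D(\ell)$ supported on fundamental discriminants $D$ in a dyadic range $X < (-1)^k D \le 2X$, with $\mathcal{L}$ a suitable set of squarefree integers built from primes in a range like $(\log X \log\log X, \exp(\sqrt{\log X\log\log X}))$ and coefficients $r(\ell)$ chosen multiplicatively so as to favour primes where $a_1(p)/p^{k-1/2}$ is large and positive — this is the standard recipe from \cite{KS}, \cite{HL} for producing large values of $L(f_1,\chi_D,k)$. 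The two technical propositions that the text promises to prove in later sections are, I expect, precisely the two moment estimates one needs: (i) a lower bound for the ``twisted first moment'' $\sum_D^\flat R(D)^2 L(f_1,\chi_D,k)$ relative to the normalizing sum $\sum_D^\flat R(D)^2$, showing the ratio is $\ge \exp\big(c\sqrt{\log X/\log\log X}\big)$; and (ii) an upper bound for $\sum_D^\flat R(D)^2 L(f_\nu,\chi_D,k)$ for each $\nu\ge 2$ that is $o$ of the quantity in (i) — this is the genuinely new ingredient, capturing the ``non-correlation'' of a resonator tuned to $f_1$ with the twists of $f_\nu$.

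Granting these two propositions, the deduction of Theorem~\ref{thm2} is a short positivity/pigeonhole argument, which is what I would write here. Since $L(f_\nu,\chi_D,k)\ge 0$ for all $\nu$ (central values of quadratic twists with even sign of the functional equation — recall that the sign is $+1$ exactly when $(-1)^kD>0$, and nonnegativity is a theorem for these $L$-values) and $R(D)^2\ge 0$, I consider the nonnegative quantity
\begin{equation*}
\Sigma := \sum_{\substack{X < (-1)^k D \le 2X}}^{\flat} R(D)^2 \Big( L(f_1,\chi_D,k) - A\sum_{\nu=2}^{r} L(f_\nu,\chi_D,k) - V \Big),
\end{equation*}
where $V := \exp\big(\tfrac{1}{40}\sqrt{\log X/\log\log X}\big)$ and $\flat$ restricts to fundamental discriminants. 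By Proposition~(i) the first piece is $\ge V' \sum^{\flat} R(D)^2$ with $V'$ somewhat larger than $V$ (here the constant $\tfrac{1}{40}$ is chosen with room to spare against whatever constant the resonance lower bound yields, and against the constant $A$), by Proposition~(ii) the middle piece is $o(V'\sum^\flat R(D)^2)$ even after multiplying by the fixed constant $A(r-1)$, and the last piece is exactly $-V\sum^\flat R(D)^2$; hence $\Sigma \ge \tfrac12 V' \sum^\flat R(D)^2 > 0$ for $X$ large. Therefore at least one $D$ in the range has $L(f_1,\chi_D,k) - A\sum_{\nu\ge2}L(f_\nu,\chi_D,k) - V > 0$, i.e.\ the desired inequality holds.

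To upgrade ``at least one $D$'' to ``$\gg X^{1-\epsilon}$ many $D$'', I would run the familiar counting refinement: let $\mathcal{S}$ be the set of $D$ in the dyadic range for which the inequality \emph{fails}, and bound the contribution of $\mathcal{S}$ to $\Sigma$ from above using $R(D)^2 \le \big(\sum_{\ell}|r(\ell)|\big)^2 =: R_{\max}^2$ together with a crude second-moment bound $\sum_{D\in\mathcal{S}} L(f_1,\chi_D,k)^2 \ll X^{1+\epsilon/2}$ (standard for this family, e.g.\ via approximate functional equation and large sieve) — this shows the ``bad'' $D$ contribute $\ll |\mathcal{S}|^{1/2} R_{\max}^2 X^{(1+\epsilon/2)/2}$ to the negative part, which must be dominated by $\Sigma \asymp V'\sum^\flat R(D)^2$; since $\sum^\flat R(D)^2 \gg X (\sum_\ell r(\ell)^2)$ and the ratio $R_{\max}^2/\sum_\ell r(\ell)^2$ is subpolynomial in $X$, rearranging forces $|\mathcal{S}|$ to be much smaller than $X$, leaving $\gg X^{1-\epsilon}$ good $D$. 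The main obstacle is genuinely Proposition~(ii): one must show that a resonator optimized for $f_1$ produces only a small twisted average of $L(f_\nu,\chi_D,k)$, and this is where the Rankin–Selberg non-correlation input \eqref{2.7} — that $\sum_{p\le x} a_{\nu_1}(p)\overline{a_{\nu_2}(p)} p^{-(2k-1)}\log p = o(x)$ for $\nu_1\ne\nu_2$ — enters to kill the diagonal-type terms that would otherwise make the resonator resonate with $f_\nu$; everything else here is bookkeeping with approximate functional equations and the arithmetic of the resonator, which I would defer to the stated propositions.
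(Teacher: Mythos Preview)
Your overall strategy is right and matches the paper: set up a resonator $R(D)$ tuned to $f_1$, and deduce Theorem~\ref{thm2} from two moment propositions---a lower bound for $\sum_D R(D)^2 L(f_1,\chi_D,k)$ and an upper bound for $\sum_D R(D)^2 L(f_\nu,\chi_D,k)$ for $\nu\ge 2$, the latter driven by the Rankin--Selberg orthogonality \eqref{2.7}. Your positivity argument for ``at least one $D$'' is fine.

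The genuine gap is in your counting refinement. Bounding $R(D)^2$ pointwise by $R_{\max}^2 = \big(\sum_\ell |r(\ell)|\big)^2$ is too crude: the ratio $R_{\max}^2/\sum_\ell r(\ell)^2$ is \emph{not} subpolynomial in $X$. With a resonator of length $N = X^{1/24}$ (or any $N=X^\delta$ with $\delta>0$ fixed, which is forced if the large values are to reach the size $\exp(c\sqrt{\log X/\log\log X})$), Cauchy--Schwarz already gives $R_{\max}^2 \le N \sum_\ell r(\ell)^2$, and this is essentially sharp. Carrying your argument through then yields only $|\{\text{good }D\}| \gg X^{1-2\delta-\epsilon}$, losing a fixed power of $X$. (There is also a logical tangle: your $\mathcal{S}$ is the \emph{bad} set, but the Cauchy--Schwarz/second-moment step must be applied to the \emph{good} set in order to extract its cardinality from the lower bound on $\Sigma$.)

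The paper repairs this by controlling a higher moment of the resonator over \emph{all} $D$: Proposition~\ref{prop1} includes the estimate $\sum_D |R(D)|^6 \ll X\exp\big(O(\log X/\log\log X)\big)$. With $\mathcal{S}$ now denoting the \emph{good} set, one bounds
\[
\sum_{D\in\mathcal{S}} L(f_1,\chi_D,k)\,|R(D)|^2 \;\le\; \Big(\sum_D L(f_1,\chi_D,k)^2\Big)^{1/2}\Big(\sum_{D\in\mathcal{S}}|R(D)|^4\Big)^{1/2} \;\ll\; X^{\frac12+\epsilon}\,|\mathcal{S}|^{1/6}\Big(\sum_D|R(D)|^6\Big)^{1/3},
\]
the last step by H\"older on $\sum_{\mathcal{S}}|R|^4 \le |\mathcal{S}|^{1/3}\big(\sum |R|^6\big)^{2/3}$. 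Comparing with the lower bound $\gg X\mathcal{R}$ coming from Proposition~\ref{prop2} and \eqref{5.4} then forces $|\mathcal{S}| \gg X^{1-\epsilon}$. The sixth-moment input is the piece missing from your sketch; without it, the pointwise bound $R_{\max}$ cannot close the argument at exponent $1-\epsilon$.
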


To establish Theorem \ref{thm2} we shall use the resonance method.  
Let $D$ be fundamental discriminant with $X < (-1)^k D \le 2X$.  
We consider the following special value of a ``resonator" Dirichlet polynomial 
at $k - \frac{1}{2}$ :
\begin{equation} 
\label{5.2} 
R(D) = \sum_{n\le N} r(n) \frac{a_1(n)}{n^{k-\frac 12}} \chi_D(n), 
\end{equation} 
where $N=X^{\frac{1}{24}}$ and $r(n)$ is a multiplicative function defined as follows.  
Set $r(n)=0$ unless $n$ is square-free, and for primes $p$ define, 
with $L=\frac 18\sqrt{\log N \log \log N}$ 
\begin{equation} 
\label{5.3} 
r(p) = 
\begin{cases} \frac{L}{\sqrt{p} \log p} &\text{ if  } L^2 \le p \le L^4\\ 
0 &\text{ otherwise}. 
\end{cases} 
\end{equation} 
The proof of Theorem \ref{thm2} will be based on the following two propositions. 

\begin{prop} 
\label{prop1} With notations as above, we have 
\begin{equation} 
\label{5.4} 
\sum_{\substack{ X < (-1)^k D \le 2X \\ D\equiv 1 \mod 4}} |R(D)|^2 \le \frac{2X}{\pi^2} {\mathcal R} + O(X) , 
\end{equation} 
where 
\begin{equation} 
\label{5.45} 
{\mathcal R} =  \prod_{L^2 \le p\le L^4} \Big(1+ r(p)^2 \frac{a_1(p)^2}{p^{2k-1}} \Big).
\end{equation} 
Further 
\begin{equation} 
\label{5.5} 
\sum_{\substack{ X < (-1)^k D \le 2X \\ D\equiv 1\mod 4}} |R(D)|^6 \ll X \exp\Big( O\Big( \frac{\log X}{\log \log X}\Big)\Big). 
\end{equation} 
\end{prop}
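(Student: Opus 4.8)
The plan is to open the square (resp.\ the sixth power), interchange summation so that $D$ runs on the inside, and split the resulting sum into a \emph{diagonal} part (where the product of the summation variables is a perfect square) and an \emph{off-diagonal} part. Throughout, the fundamental discriminants in play are precisely the squarefree integers with $(-1)^kD\in(X,2X]$ lying in a fixed residue class modulo $4$ (namely $D\equiv1\pmod4$), of which there are $\sim 2X/\pi^2$; for such $D$ one has $\chi_D(n)=\big(\tfrac Dn\big)$ (Kronecker symbol), which for odd $n$ equals $\big(\tfrac n{|D|}\big)$ by quadratic reciprocity, and hence, viewed as a function of $D$, is a fixed \emph{non-principal} real character exactly when $n$ is not a perfect square. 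Since $r$ is multiplicative, supported on odd squarefree integers (its prime support is $[L^2,L^4]$ with $L$ large), and $a_1$ is multiplicative with $|a_1(p)|\le 2p^{k-1/2}$ by Deligne, all moduli occurring below are odd or four times odd.

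\emph{The second moment.} Expanding and summing over $D$,
$$
\sum_D|R(D)|^2=\sum_{m,n\le N}r(m)r(n)\frac{a_1(m)a_1(n)}{(mn)^{k-1/2}}\sum_D\chi_D(mn).
$$
As $m,n$ are squarefree, $mn$ is a square iff $m=n$; these diagonal terms contribute at most $\big(\tfrac{2X}{\pi^2}+O(\sqrt X)\big)\sum_{n\le N}r(n)^2\tfrac{a_1(n)^2}{n^{2k-1}}$, and by multiplicativity the $n$-sum is a subsum of the Euler product $\prod_{L^2\le p\le L^4}\big(1+r(p)^2\tfrac{a_1(p)^2}{p^{2k-1}}\big)=\mathcal R$, hence $\le\mathcal R=X^{o(1)}$; so the diagonal is $\le\tfrac{2X}{\pi^2}\mathcal R+O(X)$. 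For $m\ne n$ we remove the squarefreeness of $D$ by M\"obius inversion and apply P\'olya--Vinogradov to the resulting fixed non-principal character of modulus $\ll mn\le N^2$, obtaining $\sum_D\chi_D(mn)\ll X^{1/2}(mn)^{1/4+\epsilon}$ after optimising; bounding the coefficients by $d(m)d(n)$ and summing over the $\ll N^2$ pairs leaves $\ll N^{5/2+\epsilon}X^{1/2}=O(X^{29/48+\epsilon})$ since $N=X^{1/24}$. This yields \eqref{5.4}.

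\emph{The sixth moment.} Here $R(D)$ is real (the $a_1(n)$ being real), so $|R(D)|^6=R(D)^6$ expands into a six-fold sum and
$$
\sum_D|R(D)|^6=\sum_{n_1,\dots,n_6\le N}\ \prod_{i=1}^6 r(n_i)\frac{a_1(n_i)}{n_i^{k-1/2}}\ \sum_D\chi_D(n_1\cdots n_6).
$$
Writing $n_1\cdots n_6=\ell_1\ell_2^2$ with $\ell_1$ squarefree, the off-diagonal terms (those with $\ell_1>1$) are handled exactly as above: $\sum_D\chi_D(n_1\cdots n_6)$ reduces to a fixed non-principal character of modulus $\ll\ell_1\le N^6$, so P\'olya--Vinogradov and summation over the $\ll N^6$ tuples give a total $\ll N^{15/2+\epsilon}X^{1/2}=O(X^{13/16+\epsilon})=o(X)$. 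For the diagonal ($n_1\cdots n_6=\square$) I bound everything in absolute value and use the key fact that $\sum_{n_1\cdots n_6=\square}\prod_i r(n_i)|a_1(n_i)|n_i^{-(k-1/2)}$ \emph{factors over primes}: a tuple of squarefree numbers supported on $[L^2,L^4]$ is determined by which indices each prime divides, and the square condition forces each of those index sets to have even size, so the sum equals $\prod_{L^2\le p\le L^4}\sum_{j\in\{0,2,4,6\}}\binom6j x_p^j$ with $x_p:=r(p)|a_1(p)|p^{-(k-1/2)}$. Since $x_p\le 2r(p)\le 2/\log p\to 0$, each factor is $\le\exp(O(x_p^2))$, whence the diagonal is $\le\exp\!\big(O\big(\sum_{L^2\le p\le L^4}r(p)^2\big)\big)$; a direct computation gives $\sum_{L^2\le p\le L^4}r(p)^2\ll L^2/(\log L)^2\ll\log X/\log\log X$, and multiplying by the $O(X)$ count of admissible $D$ yields \eqref{5.5}.

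\emph{Main obstacle.} The only genuinely delicate step is the uniform treatment of the off-diagonal character sums: one must carefully combine the reciprocity sign, the congruence $D\equiv1\pmod4$, the coprimality condition $(D,n_1\cdots n_6)=1$, and the M\"obius inversion encoding squarefreeness in order to honestly exhibit a \emph{non-principal} real character to which P\'olya--Vinogradov applies. Once that is done, the choice $N=X^{1/24}$ leaves ample room in every error term, and the remaining ingredients---the Euler-product identity for the diagonal and the elementary estimate for $\sum_p r(p)^2$---are routine; in particular this proposition uses neither Waldspurger's formula nor the twisted $L$-values $L(f_\nu,\chi_D,k)$.
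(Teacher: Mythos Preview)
Your argument is correct and follows essentially the same route as the paper: expand the square (resp.\ sixth power), separate the diagonal terms $n_1\cdots n_\ell=\square$ from the rest, bound the off-diagonal via M\"obius inversion and P\'olya--Vinogradov to get $\ll X^{1/2+\epsilon}(n_1\cdots n_\ell)^{1/4}$, and handle the diagonal by extending to an Euler product over $L^2\le p\le L^4$. The paper packages the character-sum estimate as a separate lemma and, for the sixth-moment diagonal, observes directly that $a_1(n_1)\cdots a_1(n_6)\ge 0$ when $n_1\cdots n_6$ is a square (so no absolute values are needed), but your version with $|a_1(n_i)|$ works just as well; the numerics ($N^{5/2+\epsilon}X^{1/2}$, $N^{15/2+\epsilon}X^{1/2}$, and $\sum_p r(p)^2\ll L^2/(\log L)^2$) match the paper exactly.
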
 
 
 \begin{prop} 
 \label{prop2}  With notations as above, we have 
 \begin{equation} 
 \label{5.6} 
 \sum_{\substack{ X < (-1)^k D \le 2X \\ D\equiv 1\mod 4}} L(f_1, \chi_D, k) |R(D)|^2 
 \gg X {\mathcal R} \exp\Big( (1+o(1)) \frac{L}{\log L}\Big), 
 \end{equation} 
 while for all $2\le \nu \le r$ 
 \begin{equation} 
 \label{5.7}
 \sum_{\substack { X < (-1)^k D \le 2X \\ D\equiv 1 \mod 4} } L(f_\nu, \chi_D, k) |R(D)|^2 
 \ll X{\mathcal R} \exp\Big( o\Big(\frac{L}{\log L}\Big)\Big). 
 \end{equation}  
 \end{prop}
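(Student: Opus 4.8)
The plan is to establish \eqref{5.6} and \eqref{5.7} uniformly by the resonance method, computing for each $\nu$ the main term of $S_\nu:=\sum_{X<(-1)^kD\le 2X,\ D\equiv 1\,(4)}L(f_\nu,\chi_D,k)\,|R(D)|^2$. First I would replace $L(f_\nu,\chi_D,k)$ by the approximate functional equation at $s=k$ coming from \eqref{2.5}: since $(-1)^kD>0$ the root number is $+1$, so $L(f_\nu,\chi_D,k)=2\sum_{m\ge1}\frac{a_\nu(m)}{m^k}\chi_D(m)\,V(m/|D|)$ exactly, for a fixed smooth $V$ with $V(0)=1$ decaying rapidly past $m\asymp|D|\asymp X$ (the analytic conductor of $f_\nu\otimes\chi_D$ at the centre being $\asymp|D|^2$). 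Expanding $|R(D)|^2=\sum_{n_1,n_2\le N}r(n_1)r(n_2)\frac{a_1(n_1)a_1(n_2)}{(n_1n_2)^{k-1/2}}\chi_D(n_1n_2)$ and interchanging summations, $S_\nu$ becomes a weighted sum over $n_1,n_2\le N$ and $m\ll X^{1+\epsilon}$ of the character sums $\sum_D\chi_D(mn_1n_2)$, taken over odd fundamental discriminants $D\equiv 1\pmod 4$ with $|D|\in(X,2X]$ (the congruence is imposed only for convenience, to keep quadratic reciprocity clean). Such a character sum has a main term of size $\asymp X\prod_{p\mid mn_1n_2}\frac{p}{p+1}$ exactly when $mn_1n_2$ is a perfect square, while the off-diagonal terms $mn_1n_2\ne\square$ are handled by Poisson summation over the family (equivalently, by invoking the known asymptotic for the $\chi_D(\ell)$-twisted first moment of $L(f_\nu,\chi_D,k)$, whose error term saves a power of $X$). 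It is here that the choice $N=X^{1/24}$ is essential: it is small enough that $\sum_{n\le N}r(n)d(n)$, and the further moments of the resonator coefficients that arise, are bounded by a small power of $X$, so the total off-diagonal contribution is $\ll X^{1-\delta}$ for some $\delta>0$, negligible against $X\mathcal R$.

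For the diagonal $mn_1n_2=\square$, writing $d=(n_1,n_2)$, $n_1=de_1$, $n_2=de_2$ with $(e_1,e_2)=1$ forces $m=e_1e_2\ell^2$. Substituting, and using that $r$ is supported on squarefree integers all of whose prime factors lie in $[L^2,L^4]$ together with the multiplicativity and Hecke relations of $a_\nu$, the diagonal part of $S_\nu$ factors (up to a convergent tail product over primes outside $[L^2,L^4]$, which is $\asymp 1$, and up to contributions of $\ell>1$ and of higher prime powers, which move the final exponent only by $o(L/\log L)$) as $\asymp X\prod_{L^2\le p\le L^4}G_p^{(\nu)}$, where $G_p^{(\nu)}=1+r(p)^2\frac{a_1(p)^2}{p^{2k-1}}+2r(p)\frac{a_1(p)a_\nu(p)}{p^{2k-1/2}}+O(1/p)$. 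Since $r(p)^2a_1(p)^2/p^{2k-1}=o(1)$ uniformly on this range, $\prod_{L^2\le p\le L^4}G_p^{(\nu)}=\mathcal R\cdot\exp\!\big((1+o(1))\Sigma_\nu+O(1)\big)$ with $\mathcal R$ as in \eqref{5.45} and $\Sigma_\nu:=\sum_{L^2\le p\le L^4}2r(p)\frac{a_1(p)a_\nu(p)}{p^{2k-1/2}}=\sum_{L^2\le p\le L^4}\frac{2L}{p\log p}\cdot\frac{a_1(p)a_\nu(p)}{p^{2k-1}}$. Thus everything reduces to estimating the prime sum $\Sigma_\nu$.

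When $\nu=1$ the summand $a_1(p)^2/p^{2k-1}$ is nonnegative, and feeding the Rankin--Selberg asymptotic \eqref{2.6}, in the form $\sum_{p\le t}\frac{a_1(p)^2}{p^{2k-1}}\log p\sim t$, into a partial summation over $[L^2,L^4]$ evaluates $\Sigma_1$ and gives the lower bound \eqref{5.6}. When $2\le\nu\le r$ the decisive point is that the resonator, built from the coefficients of $f_1$, does not correlate with those of $f_\nu$: the Rankin--Selberg estimate \eqref{2.7}, $\sum_{p\le t}\frac{a_1(p)a_\nu(p)}{p^{2k-1}}\log p=o(t)$, fed into the same partial summation shows $\Sigma_\nu=o(L/\log L)$, whence $\prod_{L^2\le p\le L^4}G_p^{(\nu)}=\mathcal R\exp(o(L/\log L))$ and \eqref{5.7} follows.

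The real work is the error analysis rather than the main term: one must check that the off-diagonal character sums, the truncation of the (rapidly convergent) $m$-sum, and the secondary diagonal terms are each $o(X\mathcal R)$ after being weighted by $|R(D)|^2$ and summed over $D$. This turns on the interplay between the smallness of $N=X^{1/24}$ (which bounds $\sum_{n\le N}r(n)d(n)$ and the relevant higher moments of the resonator coefficients by a small power of $X$) and the power-saving error term in the twisted first moment / Poisson summation over the family. A secondary point, needed only for \eqref{5.6}, is to verify that the completing variable $\ell>1$, higher prime powers in $mn_1n_2$, and the arithmetic factors $\prod_{p\mid mn_1n_2}\frac{p}{p+1}$ each alter $\log\!\big(\prod_{L^2\le p\le L^4}G_p^{(1)}\big)$ by only $O(1)$ or $o(L/\log L)$, so that they are absorbed into the $(1+o(1))$.
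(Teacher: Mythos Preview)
Your proposal is correct and follows essentially the same route as the paper: expand $|R(D)|^2$, apply the $\chi_D(n_1n_2)$-twisted first moment of $L(f_\nu,\chi_D,k)$ (which the paper packages as a separate lemma, quoting Radziwi{\l\l}--Soundararajan, with error $O(X^{7/8+\epsilon}(n_1n_2)^{3/8})$), extend the sum over $n_1,n_2$ to infinity by Rankin's trick, obtain the same Euler product $\prod_{L^2\le p\le L^4}G_p^{(\nu)}$, and finish via the Rankin--Selberg estimates \eqref{2.6} and \eqref{2.7} for $\Sigma_\nu$. The one technical device you do not make explicit is that the paper works throughout with a smooth weight $\Phi(|D|/X)$ and only at the end passes to the sharp interval $(X,2X]$ by using the nonnegativity $L(f_\nu,\chi_D,k)\ge 0$ (a consequence of Waldspurger), choosing $\Phi$ to minorize $\mathbf{1}_{[1,2]}$ for the lower bound \eqref{5.6} and to majorize it for the upper bound \eqref{5.7}.
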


We postpone the proof of these propositions to the next two sections, showing now how to 
deduce Theorem \ref{thm2} from them.  

\begin{proof}[Proof of Theorem \ref{thm2}]  Let ${\mathcal S}$ denote the set of 
fundamental discriminants $D$ with 
$X < (-1)^k D \le 2X$ and $D\equiv 1\mod 4$ with 
$$ 
L(f_1,\chi_D, k)  > A \sum_{\nu=2}^r L(f_\nu, \chi_D, k) 
+ \exp\Big( \frac 1{40} \frac{\sqrt{\log X}}{\sqrt{\log \log X}}\Big). 
$$ 
Note that 
\begin{align} 
\label{5.8} 
\sum_{\substack{X < (-1)^k D \le 2X \\ D\equiv 1\mod 4}} L(f_1, \chi_D,k) |R(D)|^2 &\le 
\sum_{\substack{X < (-1)^k D \le 2X \\ D\equiv 1\mod 4}} 
\Big(A \sum_{\nu=2}^r L(f_\nu, \chi_D,k) + \exp\Big( \frac 1{40} 
\frac{\sqrt{\log X}}{\sqrt{\log \log X}}\Big)\Big) |R(D)|^2 \nonumber \\ 
& \hskip .5 in + \sum_{D\in {\mathcal S}} L(f_1, \chi_D,k) |R(D)|^2 .
\end{align} 
Now \eqref{5.6} gives a lower bound for the left side above,
\begin{align*}
 \sum_{\substack{X < (-1)^k D \le 2X \\ D\equiv 1\mod 4}}
  L(f_1, \chi_D,k) |R(D)|^2 &\gg X {\mathcal R} 
 \exp\Big( \Big(\frac1{2} +o(1) \Big) \frac{L}{\log L} \Big) \\
 &= X{\mathcal R} \exp\Big( \Big(\frac{1}{8\sqrt{24}}+o(1)\Big) 
 \frac{\sqrt{\log X}}{\sqrt{\log \log X}}\Big),
 \end{align*}
 while \eqref{5.7} and \eqref{5.4} show that the first sum on the 
right side of \eqref{5.8} is negligible in comparison.  Thus we may conclude that 
\begin{equation} 
\label{5.9} 
 \sum_{D\in {\mathcal S}} L(f_1, \chi_D, k) |R(D)|^2  
 \gg X {\mathcal R} \exp\Big( \frac{1}{40} \frac{\sqrt{\log X}}{\sqrt{\log \log X}}\Big). 
\end{equation}  
 Applications of the Cauchy-Schwarz and H{\" o}lder inequalities show that the left side above is 
 \begin{align*}
 \le \Big( \sum_{\substack{ X< (-1)^k D \le 2X  \\ D\equiv 1\mod 4}} L(f_1, \chi_D, k)^2 \Big)^{\frac 12} 
 \Big( \sum_{D\in {\mathcal S}} |R(D)|^4 \Big)^{\frac 12} 
& \ll (X^{1+\epsilon})^{\frac 12} \Big( |{\mathcal S}|\Big)^{\frac{1}{6}} \Big( 
 \sum_{\substack{ X< (-1)^k D \le 2X \\ D\equiv 1 \mod 4}} |R(D)|^6\Big)^{\frac 13} 
\\
& \ll X^{\frac 56 +\epsilon} |{\mathcal S}|^{\frac 16}. 
 \end{align*} 
 Here we made use of \eqref{5.5} to bound the sum involving $|R(D)|^6$, and used the Perelli--Pomyka{\l}a bound \cite{PP}
 (obtained from Heath-Brown's large sieve for quadratic characters) of $X^{1+\epsilon}$ for the second moment of 
 $L$-values.  Theorem \ref{thm2} follows. 
\end{proof} 

We remark that the second moment of the central $L$-values should be of size $X \log X$, which would lead to a better quantification for the number of large values produced in Theorems \ref{thm1} and \ref{thm2}.   This second moment remains barely out of reach of present technology, but an asymptotic is known assuming GRH (see \cite{SY}).

\section{Proof of Proposition \ref{prop1}} 

\begin{lem} \label{lem1} Let $u\le X$ be an odd natural number.  If $u$ is a square then 
$$
\sum_{\substack{X < (-1)^k D \le 2X \\ D\equiv 1 \mod 4}} \chi_D(u)  = \frac{X}{2\zeta(2)} \prod_{p|2u} \Big(\frac{p}{p+1}\Big) + O(X^{\frac 12+\epsilon}u^{\frac 14}), 
$$ 
while if $u$ is not a square then 
$$ 
\sum_{\substack{X < (-1)^k D \le 2X \\ D\equiv 1 \mod 4} } \chi_D(u) \ll X^{\frac 12+\epsilon} u^{\frac 14}. 
$$
\end{lem}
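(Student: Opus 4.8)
The plan is to treat this as a standard quadratic character sum estimate: isolate a main term coming from a principal character in the square case, and bound everything else by the squarefree sieve together with P\'olya--Vinogradov.

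First I would write $D=(-1)^km$ with $m=|D|$. Since $\chi_D$ is attached to a fundamental discriminant $D$, and such a $D$ with $D\equiv 1\bmod 4$ is exactly a squarefree $D\equiv 1\bmod 4$, the range of summation is $m$ squarefree (hence odd), $X<m\le 2X$, $(-1)^km\equiv 1\bmod 4$, and $\chi_D(u)=\left(\tfrac{(-1)^km}{u}\right)$ is a Jacobi symbol (legitimate as $u$ is odd, and automatically $0$ when $\gcd(m,u)>1$). Detecting squarefreeness by $\mathbf 1_{m\text{ squarefree}}=\sum_{a^2\mid m}\mu(a)$ and writing $m=a^2\ell$ — so $a$ is forced odd, and coprime to $u$ since $\left(\tfrac{a^2}{u}\right)=\mathbf 1_{(a,u)=1}$ — and using $a^2\equiv 1\bmod 8$ to simplify the congruence, the sum becomes
$$ \sum_{\substack{a\ \mathrm{odd},\ \mu(a)\ne 0,\ (a,u)=1\\ a\le\sqrt{2X}}}\mu(a)\left(\frac{(-1)^k}{u}\right)\sum_{\substack{X/a^2<\ell\le 2X/a^2\\ \ell\equiv(-1)^k\bmod 4}}\left(\frac{\ell}{u}\right). $$
The key structural observation is that $\ell\mapsto\left(\tfrac{\ell}{u}\right)$ is a Dirichlet character $\psi_u$ modulo $u$ (completely multiplicative, periodic mod the radical of $u$, vanishing off units), and it is the principal character precisely when $u$ is a perfect square; otherwise it is non-principal.

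Next I would detect $\ell\equiv(-1)^k\bmod 4$ using the two characters modulo $4$; since $u$ is odd, this writes each inner sum over $I=(X/a^2,2X/a^2]$ as a combination of $\sum_{\ell\in I}\psi_u(\ell)$ and $\sum_{\ell\in I}\chi_{-4}(\ell)\psi_u(\ell)$. The second character $\chi_{-4}\psi_u$ is non-principal modulo $4u$ in every case, while $\psi_u$ (viewed modulo $4u$) is non-principal iff $u$ is not a square. If $u$ is not a square, P\'olya--Vinogradov bounds each inner sum by $\ll u^{1/2}\log u$; splitting the $a$-sum at a parameter $Y$, applying this for $a\le Y$ and the trivial bound $\ll X/a^2+1$ for $a>Y$ (total $\ll Yu^{1/2+\epsilon}+X/Y+\sqrt X$), and choosing $Y\asymp\sqrt X/u^{1/4}$ gives the stated bound $\ll X^{1/2+\epsilon}u^{1/4}$. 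If $u$ is a square, then $\left(\tfrac{(-1)^k}{u}\right)=1$ (as $u$ is an odd square) and the principal part contributes to the $\ell$-sum a main term of density type, namely $\tfrac12\cdot\tfrac{X}{2a^2}\prod_{p\mid u}(1-1/p)+O(2^{\omega(u)})$; summing $\mu(a)/a^2$ over odd $a$ coprime to $u$ and completing the $a$-sum (error $O(\sqrt X)$) yields $\tfrac X4\prod_{p\mid u}(1-1/p)\prod_{p\nmid 2u}(1-p^{-2})$, which simplifies to $\tfrac{X}{2\zeta(2)}\prod_{p\mid 2u}\tfrac{p}{p+1}$. The remaining $\chi_{-4}\psi_u$ part is non-principal and is handled exactly as in the non-square case, contributing $\ll X^{1/2+\epsilon}u^{1/4}$.

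The routine ingredients are the Jacobi-symbol bookkeeping, the identification of $\psi_u$ and when it is principal, and the elementary Euler-product computation producing the constant $\tfrac{1}{2\zeta(2)}\prod_{p\mid 2u}\tfrac{p}{p+1}$. The one point that needs genuine care is that the squarefree sieve introduces a sum over $a$ up to $\sqrt X$, so a direct use of P\'olya--Vinogradov loses a factor $\sqrt X$; balancing the P\'olya--Vinogradov range against the trivially bounded tail at $Y\asymp\sqrt X/u^{1/4}$ is exactly what produces the exponent $u^{1/4}$, and this is the step I would be most careful to optimize. I would also verify uniformity: the hypothesis $u\le X$ ensures $1\le Y\le\sqrt{2X}$, so the split is never degenerate, and all error terms ($O(\sqrt X)$ from completing the $a$-sum, $O(2^{\omega(u)}\sqrt X)$ from the density estimates) are absorbed into $O(X^{1/2+\epsilon}u^{1/4})$.
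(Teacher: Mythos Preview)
Your proposal is correct and follows essentially the same route as the paper: detect squarefreeness by the M\"obius sieve $m=a^2\ell$, detect the congruence modulo $4$ via the two characters $\chi_0,\chi_{-4}$, observe that $\ell\mapsto(\tfrac{\ell}{u})$ is principal iff $u$ is a square, and bound the non-principal contributions by P\'olya--Vinogradov balanced against the trivial estimate. The only cosmetic difference is in the balancing step: the paper bounds the inner sum by $\min(\sqrt{u}\log(4u),X/a^2)\le(\sqrt{u}\log(4u)\cdot X/a^2)^{1/2}$ and sums $1/a$ directly, whereas you split the $a$-sum at $Y\asymp\sqrt{X}/u^{1/4}$; these are equivalent optimizations yielding the same $X^{1/2+\epsilon}u^{1/4}$.
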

\begin{proof} Let $\chi_0$ and $\chi_{-4}$ denote the principal and non-principal 
characters $\mod 4$.  Note that, for any non-zero integer $D$,  
$$ 
\frac 12 (\chi_0(D) + \chi_{-4}(D)) \sum_{\substack{ a^2|D \\ a\text{ odd} }} \mu(a) = 
\begin{cases} 
1 &\text{ if  } D \equiv 1\mod 4 \text{ is square-free}, \\
0&\text{ otherwise}. 
\end{cases}
$$ 
Thus writing $D= a^2 b$ with $b$ square-free, we see that 
 \begin{equation} 
 \label{6.1} 
\sum_{\substack{X < (-1)^k D \le 2X \\ D\equiv 1 \mod 4}} \chi_D(u)  = \frac 12 \sum_{\substack{a \le \sqrt{2X} \\ a \text{ odd}}} \mu(a) \sum_{X/a^2 < (-1)^k b \le 2X/a^2} (\chi_0(b) +\chi_{-4}(b)) \Big( \frac{a^2b}{u} \Big). 
\end{equation} 

If $u$ is not a square, then $\chi_0 (\cdot) (\frac{\cdot}{u})$ and $\chi_{-4}(\cdot) (\frac{\cdot}{u})$ are both non-principal Dirichlet characters to the modulus $4u$.  Therefore, using the P{\' o}lya--Vinogradov 
bound we obtain 
$$ 
 \sum_{X/a^2 < (-1)^k b \le 2X/a^2} (\chi_0(b) +\chi_{-4}(b)) \Big( \frac{a^2b}{u} \Big) \ll \min \Big( \sqrt{u} \log (4u), \frac{X}{a^2} \Big).
$$ 
Here the bound $\sqrt{u}\log (4u)$ comes from P{\' o}lya--Vinogradov, and the bound $X/a^2$ by estimating the sum over $b$ trivially.  Therefore 
in this case we obtain 
$$ 
\sum_{\substack{X < (-1)^k D \le 2X \\ D\equiv 1 \mod 4}} \chi_D(u)  \ll  \sum_{\substack{a \le \sqrt{2X} \\ a \text{ odd}}} \min \Big( \sqrt{u} \log (4u), \frac{X}{a^2} \Big) 
\ll  \sum_{\substack{a \le \sqrt{2X} \\ a \text{ odd}}} \Big( \sqrt{u} \log (4u) \frac{X}{a^2} \Big)^{\frac 12} \ll X^{\frac 12 +\epsilon} u^{\frac 14}. 
$$

If $u$ is a square, then $\chi_0(\cdot)(\frac{\cdot}{u})$ is a principal character, which contributes 
\begin{align*}
\frac 12 \sum_{\substack{ a\le \sqrt{2X} \\ (a,2u)=1}} \mu(a) \sum_{\substack{X/a^2 < (-1)^k b \le 2X/a^2 \\ (b,2u)=1}} 1
&= \frac 12 \sum_{\substack{a\le \sqrt{2X} \\ (a,2u)=1}} \mu(a) \Big( \frac{X}{a^2} \frac{\phi(2u)}{2u} + O(u^{\epsilon}) \Big) \\
&= \frac{X}{2\zeta(2)} \prod_{p|2u} \Big( \frac{p}{p+1} \Big) + O(X^{\frac 12 +\epsilon}).
\end{align*}
This completes the proof of the lemma.
\end{proof} 

We are now ready to prove Proposition \ref{prop1}.   Expanding out the definition of $R(D)$, we obtain 
\begin{align*} 
\sum_{\substack{X < (-1)^k D \le 2X \\ D\equiv 1 \mod 4}} |R(D)|^2 = \sum_{n_1, n_2 \le N} r(n_1) r(n_2) \frac{a_1(n_1)}{n_1^{k-\frac 12}} \frac{a_1(n_2)}{n_2^{k-\frac 12}} \sum_{\substack{X < (-1)^k D \le 2X \\ D\equiv 1 \mod 4}} 
\chi_D(n_1n_2),  
\end{align*} 
and we now use Lemma \ref{lem1} to estimate the sum over $D$.  Since $r(n)=0$ unless $n$ is odd and square-free, $|r(n)|\le 1$ always, and 
$|a_1(n)|/n^{k-\frac 12} \le d(n) \ll n^{\epsilon}$, we see that the error terms arising from Lemma \ref{lem1} contribute 
$$ 
\ll X^{\frac 12+\epsilon} \sum_{n_1, n_2\le N} (n_1 n_2)^{\frac 14+\epsilon} \ll X^{\frac 12+\epsilon} N^{\frac 52+\epsilon}.
$$ 
The main term in Lemma \ref{lem1} arises when $n_1n_2$ is a square, and since $n_1$ and $n_2$ are both square-free, this means that $n_1=n_2$.  
Thus the main term is 
$$ 
\frac{X}{2\zeta(2)} \sum_{n\le N} r(n)^2 \frac{a_1(n)^2}{n^{2k-1}} \prod_{p|2n} \Big( \frac{p}{p+1}\Big) \le \frac{2X}{\pi^2} \prod_{L^2 \le p \le L^4} \Big( 1+ r(p)^2 \frac{a_1(p)^2}{p^{2k-1}} \frac{p}{p+1}\Big) \le \frac{2 X}{\pi^2} {\mathcal R}, 
$$
upon extending the sum over $n$ to all natural numbers, and recalling the definition of the multiplicative function $r$.  This proves \eqref{5.4}.

The proof of \eqref{5.5} is similar.  We expand out $R(D)^6$ and use Lemma \ref{lem1}.  The error terms that arise are bounded now 
by
$$ 
\ll X^{\frac 12+\epsilon} \sum_{n_1, \ldots, n_6 \le N} (n_1\cdots n_6)^{\frac 14+\epsilon} \ll X^{\frac 12+\epsilon} N^{\frac{15}{2}+\epsilon}.
$$ 
The main term arises from terms with $n_1 \cdots n_6$ being a square, and for these terms $a_1(n_1)\cdots a_1(n_6)$ is always non-negative 
(since $n_i$ are all square-free, $a_1$ is a multiplicative function, and each prime dividing $n_1 \cdots n_6$ divides an even number of $n_i$).    
 Thus the main term is 
 $$ 
 =\frac{\pi^2 X}{9} \sum_{\substack{ n_1, \ldots, n_6\le N \\ n_1\cdots n_6 =\square}} r(n_1)\cdots r(n_6) \frac{a_1(n_1) \cdots a_1(n_6)}{(n_1\cdots n_6)^{k-\frac 12}} \prod_{p|n_1\cdots n_6} \Big(\frac{p}{p+1}\Big). 
 $$
 Extending the sum over $n_i$ to infinity, and using multiplicativity, the above is 
 \begin{align*}
& \ll X \prod_{L^2 \le p \le L^4} \Big( 1+ \binom{6}{2} r(p)^2 \frac{a(p)^2}{p^{2k-1}} + \binom{6}{4} r(p)^4 \frac{a(p)^4}{p^{4k-2}} + 
 \binom{6}{6} r(p)^6 \frac{a(p)^6}{p^{6k-3}} \Big)
 \\
 &\ll X \exp\Big( O\Big( \sum_{L^2 \le p \le L^4} \frac{L^2}{p (\log p)^2} \Big) \Big) \ll X \exp\Big( O\Big(\frac{\log X}{\log \log X}\Big)\Big). 
 \end{align*}
 This completes the proof of Proposition \ref{prop1}.

\section{Proof of Proposition \ref{prop2}} 

\begin{lem} \label{lem2}  Let $u$ be an odd positive integer, and write $u=u_1 u_2^2$ 
with $u_1$ square-free.  Let $\Phi$ denote 
a smooth function compactly supported in $[1/2,5/2]$, and with $0\le \Phi(t) \le 1$ for all $t$.   Then 
$$ 
\sum_{\substack{ (-1)^k D >0 \\ D\equiv 1\mod 4 }} \chi_D(u) L(f_\nu, \chi_D,k) \Phi\Big( \frac{|D|}{X}\Big) 
=   A_\nu h_\nu(u) \Big( \int_0^\infty \Phi(t) dt \Big) \frac{a_\nu(u_1)}{u_1^k} X 
+ O(X^{\frac 78 +\epsilon} u^{\frac 38}),  
$$ 
where $A_\nu$ is a non-zero constant, and $h_\nu$ is a multiplicative 
function with $h_\nu(p^t) = 1+ O(1/p^{t})$ 
for prime powers $p^t$.  
\end{lem}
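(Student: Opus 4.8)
The plan is to expand $L(f_\nu,\chi_D,k)$ by its approximate functional equation, interchange the summations, evaluate the resulting character sum over $D$ by orthogonality of quadratic characters (à la Lemma~\ref{lem1}), and then identify the diagonal contribution through an Euler product.

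To begin, note that for a fundamental discriminant $D$ with $(-1)^kD>0$ one has $\chi_D(-1)=\mathrm{sgn}(D)=(-1)^k$, so the sign $(-1)^k\chi_D(-1)$ of the functional equation \eqref{2.5} equals $+1$. Hence, using also that the $a_\nu(n)$ are real, the approximate functional equation reads
\[
L(f_\nu,\chi_D,k)=2\sum_{n\ge1}\frac{a_\nu(n)\chi_D(n)}{n^k}\,W\!\Big(\frac{2\pi n}{|D|}\Big),\qquad
W(\xi)=\frac{1}{2\pi i}\int_{(2)}\frac{\Gamma(k+s)}{\Gamma(k)}\,\xi^{-s}\,\frac{ds}{s},
\]
where $W(0)=1$, $W(\xi)=1+O(\xi^{1/2})$ as $\xi\to0$, and $W(\xi)\ll_A\xi^{-A}$ as $\xi\to\infty$, so that effectively only $n\le X^{1+\epsilon}$ matters. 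Substituting this, multiplying by $\chi_D(u)$, and summing over the relevant $D$ reduces the left side of the Lemma to
\[
2\sum_{n\le X^{1+\epsilon}}\frac{a_\nu(n)}{n^k}\sum_{\substack{(-1)^kD>0\\ D\equiv1\ (4)}}\chi_D(un)\,\Psi_n\!\Big(\frac{|D|}{X}\Big)\ +\ O(X^{-100}),
\]
where $\Psi_n(t)=W(2\pi n/(Xt))\Phi(t)$ is smooth, supported in $[1/2,5/2]$, and bounded uniformly in $n$.

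For the inner sum over $D$ I would use a smoothly weighted analogue of Lemma~\ref{lem1}, proved by exactly the argument given there: writing $u=u_1u_2^2$ with $u_1$ squarefree, it produces a main term precisely when $un$ is a perfect square, i.e. when $n=u_1\ell^2$, of size $\asymp X\prod_{p\mid 2un}\tfrac{p}{p+1}$ times a smooth factor coming from $\Psi_n$, and an error $\ll X^{1/2+\epsilon}(un)^{1/4}$ in every case. Extracting the diagonal and using $W(0)=1$ (so that, up to a controlled error, the $\Psi_n$-weight becomes $\int_0^\infty\Phi(t)\,dt$), the main term is
\[
X\Big(\int_0^\infty\Phi(t)\,dt\Big)\frac{1}{\zeta(2)}\cdot\frac{1}{u_1^k}\sum_{\ell\ge1}\frac{a_\nu(u_1\ell^2)}{\ell^{2k}}\prod_{p\mid 2u_1u_2\ell}\frac{p}{p+1},
\]
and for fixed $u_1,u_2$ the $\ell$-sum factors into an Euler product. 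The crucial arithmetic input is the identity (a consequence of the Hecke relations)
\[
\sum_{\ell\ge1}\frac{a_\nu(\ell^2)}{\ell^{s}}=\frac{L(\mathrm{sym}^2 f_\nu,\,s)}{\zeta(2s-4k+2)}:
\]
since $L(\mathrm{sym}^2 f_\nu,s)$ is entire and nonvanishing on the edge $\Re s=2k$ of its region of absolute convergence, while $\zeta(2s-4k+2)$ is zero-free and bounded away from $0,\infty$ for $\Re s\ge 2k-\tfrac14$, this Dirichlet series is holomorphic and polynomially bounded there; hence the $\ell$-sum converges absolutely, and by shifting a contour the error in replacing $\Psi_n$ by $\int_0^\infty\Phi$ (essentially the tail $\ell\gg\sqrt{X/u_1}$) is $O(u^{O(1)}X^{3/4+\epsilon})$. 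Collecting Euler factors and pulling out $a_\nu(u_1)/u_1^k$ and the $u$-independent part $A_\nu$ (nonzero because $L(\mathrm{sym}^2 f_\nu,2k)\ne0$) leaves a factor multiplicative in $u$ with $p$-part $1+O(1/p^{t})$, giving the stated main term.

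It remains to bound the off-diagonal terms — those $n\le X^{1+\epsilon}$ with $un$ not a perfect square — by $O(X^{7/8+\epsilon}u^{3/8})$, and this I expect to be the main obstacle. The estimate $\ll X^{1/2+\epsilon}(un)^{1/4}$ for each character sum, summed against $\sum_{n\le X^{1+\epsilon}}|a_\nu(n)|n^{-k}n^{1/4}\ll X^{3/4+\epsilon}$, gives only $O(X^{5/4+\epsilon}u^{1/4})$, which is useless, and Heath-Brown's quadratic large sieve applied through Cauchy--Schwarz gives only $O(X^{1+\epsilon})$, still with no saving over the main term. The required power saving comes instead from Poisson summation over $D$ in residue classes modulo $4un$: the frequency $m=0$ recovers the diagonal, the frequencies $m\ne0$ contribute only once $un\gg X$, and for those one gains further from the resulting sum of $a_\nu(n)$ against the attached Gauss sums — by Voronoi summation for $f_\nu$, or by recognizing a shifted twisted $L$-value and invoking its convexity bound — after which optimizing the ranges yields the exponents $\tfrac78$ and $\tfrac38$. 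This is the standard mechanism in the analysis of moments of quadratic twists of modular $L$-functions, and I would carry out this step following \cite{PP}, \cite{RS}, \cite{SY}. Finally, the restrictions $D\equiv1\pmod 4$ and $u$ odd are precisely what keep this Poisson/orthogonality step clean, matching Lemma~\ref{lem1}.
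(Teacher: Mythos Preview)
Your proposal is correct and follows essentially the same approach as the paper: the paper's proof is itself only a sketch that cites Proposition~2 of \cite{RS} and outlines exactly the steps you describe---approximate functional equation, diagonal terms from $un=\square$, Euler-product evaluation via Hecke relations and the symmetric square, and off-diagonal estimation via Poisson over $D$ following \cite{I,KS2,SY,RS}. Your write-up in fact gives more detail than the paper's own proof, and your identification of the off-diagonal bound as the main technical input (requiring Poisson/Voronoi rather than termwise Polya--Vinogradov) is precisely the point.
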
 
\begin{proof}   This is a variant of Proposition 2 of \cite{RS} which treats the case of 
quadratic twists of an elliptic curve.   Indeed Proposition 2 of \cite{RS} is a little 
more general in allowing the discriminants $D$ to lie in a given progression modulo 
the level, and also to restrict $D$ to be multiples of another parameter $v$.   Only 
minor modifications to that argument are needed to handle eigenforms of weight 
$k$ instead of elliptic curves.  The techniques involved are based on earlier work 
in the family of quadratic twists, see \cite{I, KS2, SY}.   Very briefly, we start with 
an ``approximate functional equation" 
$$ 
L(f_\nu, \chi_D, k) = 2 \sum_{n=1}^{\infty} \frac{a_\nu(n)}{n^k} \chi_D(n) W\Big( \frac{n}{|D|}\Big), 
$$ 
for a suitable weight function $W(\xi)$, which is approximately $1$ for small $\xi$ 
and decays rapidly as $\xi \to \infty$.  Then the sum we wish to evaluate equals 
$$
2\sum_{n=1}^{\infty} \frac{a_\nu(n)}{n^k} \sum_{\substack{ (-1)^k D >0 \\ D\equiv 1\mod 4 }} 
\chi_D(un) W\Big(\frac{n}{|D|}\Big) 
\Phi\Big(\frac{|D|}{X}\Big).
$$
The main terms arise from the case when $un$ is a perfect square, and the contribution 
of all other terms can be bounded as in \cite{RS}.  Since $u=u_1 u_2^2$, the condition 
$un$ being a square amounts to writing $n=u_1m^2$, and so the main term equals 
$$ 
2 \sum_{m=1}^{\infty} \frac{a_\nu(u_1 m^2)}{u_1^k m^{2k}} \sum_{\substack{  (-1)^k D >0 \\ D\equiv 1\mod 4  \\ (D,u_1u_2m)=1}}
W\Big( \frac{u_1m^2}{|D|} \Big) \Phi\Big( \frac{|D|}{X} \Big). 
$$ 
Evaluating the sum over $D$ asymptotically, we arrive at a main term 
$$ 
X \Big( \int_0^\infty \Phi(t) dt \Big) \frac{4}{\pi^2} \sum_{m=1}^{\infty} \frac{a_\nu(u_1 m^2)}{u_1^k m^{2k} } \prod_{\substack{ p|u_1 u_2 m \\ p>2 }} \Big( \frac{p}{p+1}\Big). 
$$ 
Using the Hecke relations, this can be put in the form stated in the lemma, and we note 
that the constant $A_\nu$ is closely related to the value of the symmetric square 
$L$-function attached to $f_\nu$ evaluated at the edge of the critical strip; see 
Proposition 2 of \cite{RS} for further details.  
\end{proof}

With this lemma in place, we are ready to evaluate 
$$ 
\sum_{\substack{ (-1)^k D> 0 \\ D\equiv 1 \mod 4}} L(f_\nu, \chi_D, k) |R(D)|^2 \Phi\Big( \frac{|D|}{X}\Big), 
$$ 
for $\nu =1$, $\ldots$, $r$, and $\Phi$ being a suitable approximation to the indicator function of $[1,2]$.  Expanding out $|R(D)|^2$ 
and using Lemma \ref{lem2} we see that the above equals 
\begin{align} 
\label{6.1} 
A_\nu \Big(\int_0^\infty \Phi(t) dt \Big)  X& \sum_{n_1, n_2 \le N} r(n_1) r(n_2) \frac{a_1(n_1) a_1(n_2)}{(n_1n_2)^{k-\frac 12}} h_\nu(n_1n_2) 
\frac{a_\nu (n_1 n_2/(n_1, n_2)^2)}{(n_1 n_2/(n_1, n_2)^2)^k} \nonumber \\
&+O\Big(X^{\frac 78+\epsilon} \sum_{n_1, n_2 \le N} r(n_1) r(n_2) 
\frac{|a_1(n_1) a_1(n_2)|}{(n_1n_2)^{k-\frac 12}}   (n_1 n_2)^{\frac 38} \Big). 
\end{align}
In deriving the above expression, we used that $n_1$ and $n_2$ are square-free (else $r(n_1) r(n_2)=0$) so that 
$n_1 n_2 = (n_1 n_2/(n_1, n_2)^2) (n_1, n_2)^2$ with $n_1 n_2/(n_1, n_2)^2$ being square-free.

Since $|a_1(n_1)| \le d(n_1) n_1^{k-\frac 12} \ll n_1^{k-\frac 12+\epsilon}$ by the Deligne bound, and $r(n_1) \le 1$ always, 
the error term in \eqref{6.1} 
may be bounded by 
$$
\ll X^{\frac 78+\epsilon} N^{\frac {11}4 +\epsilon} \ll X^{\frac{99}{100}}, 
$$ 
which is acceptable.  

We now analyze the main term in \eqref{6.1}.  First we extend the sums over 
$n_1$ and $n_2$ to all natural numbers and analyze this contribution, and then 
we show that the contribution of the terms 
with $\max (n_1, n_2) > N$ is negligible.  When the terms over $n_1$, $n_2$ 
are extended to all natural numbers, the resulting sums are multiplicative in nature, 
and thus these give 
\begin{equation} 
\label{6.2} 
A_\nu \Big(\int_0^\infty \Phi(t) dt \Big)  X \prod_{L^2 \le p \le L^4} \Big( 1 + 2 r(p) 
h_\nu(p) \frac{a_1(p) a_\nu(p) }{p^{2k-\frac 12}} 
+ r(p)^2 h_\nu (p^2) \frac{a_1(p)^2}{p^{2k-1}} \Big). 
\end{equation} 
In thinking of the Euler product above, the first term corresponds to $n_1$ and 
$n_2$ both not divisible by $p$, the middle term 
corresponds to exactly one of $n_1$ or $n_2$ being divisible by $p$, and the last term 
to both $n_1$ and $n_2$ being 
divisible by $p$.  

Now it remains to show that the terms with $\max(n_1,n_2) >N$ (which are not 
present in \eqref{6.1} but included in \eqref{6.2}) 
contribute a negligible amount.  These terms may be bounded by 
\begin{align*} 
&\ll X \sum_{\max (n_1, n_2) >N} r(n_1) r(n_2) \frac{|a_1(n_1) a_1(n_2)|}{(n_1n_2)^{k-\frac 12}} 
|h_\nu(n_1n_2)| 
\frac{|a_\nu(n_1n_2/(n_1,n_2)^2)|}{(n_1n_2/(n_1,n_2)^2)^k} \\
&\ll X \sum_{n_1, n_2 =1}^{\infty}  r(n_1) r(n_2) \frac{|a_1(n_1) a_1(n_2)|}{(n_1n_2)^{k-\frac 12}} 
|h_\nu(n_1n_2)| 
\frac{|a_\nu(n_1n_2/(n_1,n_2)^2)|}{(n_1n_2/(n_1,n_2)^2)^k} \Big( \frac{n_1 n_2}{N}\Big)^{\alpha}, 
\end{align*} 
for any $\alpha >0$.  By multiplicativity the above equals 
$$ 
X N^{-\alpha} \prod_{L^2 \le p\le L^4} 
\Big(1 + 2r(p)p^{\alpha}  |h_\nu(p)| \frac{|a_1(p)a_\nu(p)|}{p^{2k-\frac 12}} + r(p)^2 
p^{2\alpha}|h_\nu(p^2)| \frac{a_1(p)^2}{p^{2k-1}}\Big). 
$$ 
Since $h_\nu(p^t) = 1+O(1/p^t)$ and $|a_\nu(p)| \le 2 p^{k-\frac 12}$ this is 
$$ 
\ll X N^{-\alpha} \exp\Big( \sum_{L^2 \le p \le L^4} \Big( \frac{8Lp^{\alpha} }{p\log p} +
 \frac{4L^2p^{2\alpha}}{p (\log p)^2} \Big) \Big(1+ O\Big(\frac 1p\Big) \Big) \Big).  
$$
Upon choosing $\alpha= 1/(8\log L)$, and using the prime number theorem, the above is 
$$ 
\ll X\exp\Big(-\frac{\log N}{8\log L} +  \frac{8 L}{\log L} + \frac{2L^2}{(\log L)^2} \Big) \ll X, 
$$ 
recalling that $L = \frac 18 \sqrt{\log N \log \log N}$.  

From our work above we conclude that 
\begin{align} 
\label{6.3} 
\sum_{\substack{ (-1)^k D> 0 \\ D\equiv 1 \mod 4}} &L(f_\nu, \chi_D, k) |R(D)|^2 \Phi\Big( \frac{|D|}{X}\Big) \nonumber \\
&= A_\nu \Big(\int_0^\infty \Phi(t) dt \Big)  X \prod_{L^2 \le p \le L^4} \Big( 1 + 2 r(p) h_\nu(p) \frac{a_1(p) a_\nu(p) }{p^{2k-\frac 12}} 
+ r(p)^2 h_\nu (p^2) \frac{a_1(p)^2}{p^{2k-1}} \Big) +O(X). 
\end{align}
Let us compare the product above with the product ${\mathcal R}$.   For $L^2 \le p\le L^4$, note that 
(keeping in mind $r(p) = L/(\sqrt{p}\log p) \le 1/\log p$ is always small, that  
$h_\nu(p^t) = 1+O(1/p)$, and that $|a_\nu(p)|\le 2p^{k-\frac 12}$) 
\begin{align*} 
\Big( 1 + 2 r(p) h_\nu(p) \frac{a_1(p) a_\nu(p) }{p^{2k-\frac 12}} 
+ r(p)^2 h_\nu (p^2) \frac{a_1(p)^2}{p^{2k-1}} \Big) &\Big(1+ r(p)^2 \frac{a_1(p)^2}{p^{2k-1}}\Big)^{-1} \\
& = 1+ 2r(p) \frac{a_1(p)a_\nu(p)}{p^{2k-\frac 12}} + O\Big( \frac{r(p)^3}{\sqrt{p}}\Big) \\
&= \exp\Big( 2r(p) \frac{a_1(p)a_\nu(p)}{p^{2k-\frac 12}} + O\Big( \frac{r(p)^2}{\sqrt{p}}\Big) \Big). 
\end{align*}
Using the prime number theorem we conclude that the product in \eqref{6.3} equals 
\begin{align} 
\label{6.4} 
&{\mathcal R} \exp\Big( \sum_{L^2 \le p\le L^4} \Big( 2r(p) \frac{a_1(p)a_\nu(p)}{p^{2k-\frac 12}} 
+ O\Big( \frac{r(p)^2}{\sqrt{p}}\Big) \Big)\Big) 
\nonumber \\
= &{\mathcal R} \exp\Big( \sum_{L^2 \le p\le L^4}2r(p) \frac{a_1(p)a_\nu(p)}{p^{2k-\frac 12}} 
+ O\Big( \frac{L}{(\log L)^3} \Big)\Big). 
\end{align}

We are now ready to prove Proposition \ref{prop2}.  In the case $\nu =1$, take 
$1\ge \Phi(t)\ge 0$ to be a  smooth function 
supported on $[1,2]$ with $\Phi(t) = 1$ on $[1.1,1.9]$.  Then 
$$
\sum_{\substack{X <  (-1)^k D \le 2X \\ D\equiv 1 \mod 4}} L(f_1, \chi_D, k) |R(D)|^2 \ge 
\sum_{\substack{ (-1)^k D> 0 \\ D\equiv 1 \mod 4}} L(f_1, \chi_D, k) |R(D)|^2 \Phi\Big( \frac{|D|}{X}\Big), 
$$
and from \eqref{6.3} and \eqref{6.4}, we conclude that this is 
$$ 
\ge \frac{4}{5} A_1 X {\mathcal R} \exp\Big( \sum_{L^2 \le p\le L^4}2r(p) 
\frac{a_1(p)^2}{p^{2k-\frac 12}} + O\Big( \frac{L}{(\log L)^3} \Big)\Big) +O(X). 
$$
Applying the Rankin--Selberg estimate \eqref{2.6} and partial summation, we obtain 
$$ 
 \sum_{L^2 \le p\le L^4}2r(p) \frac{a_1(p)^2}{p^{2k-\frac 12}} 
 = 2 L\sum_{L^2 \le p\le L^4} \frac{a_1(p)^2}{p^{2k} \log p} 
 = \Big(\frac1{2}+o(1)\Big) \frac{L}{\log L}, 
 $$ 
 from which \eqref{5.6} follows. 
 
 Now we turn to the case $\nu >1$, where we take $1\ge \Phi(t) \ge 0$ to 
 be a smooth function compactly supported on $[1/2,5/2]$ and with  $\Phi(t)=1$ on $[1,2]$.      
 Now our work in \eqref{6.3} and \eqref{6.4} shows that 
 \begin{align*}
 \sum_{\substack{X <  (-1)^k D \le 2X \\ D\equiv 1 \mod 4}} L(f_\nu, \chi_D, k) |R(D)|^2 &\le 
\sum_{\substack{ (-1)^k D> 0 \\ D\equiv 1 \mod 4}} L(f_\nu, \chi_D, k) |R(D)|^2 \Phi\Big( \frac{|D|}{X}\Big) \\ 
&\le 2A_\nu X {\mathcal R} \exp\Big( \sum_{L^2 \le p\le L^4} 2 
r(p) \frac{a_1(p)a_\nu(p)}{p^{2k-\frac 12}} + O\Big(\frac{L}{(\log L)^3}\Big) \Big) +O(X). 
\end{align*}
Here the Rankin--Selberg estimate \eqref{2.7} and partial summation give 
$$ 
\sum_{L^2 \le p\le L^4}2r(p) \frac{a_\nu(p) a_1(p)}{p^{2k-\frac 12}} = o\Big( \frac{L}{\log L}\Big), 
$$ 
from which \eqref{5.7} follows. 
 
\medskip

\end{document}